\hideLIPIcs\ccsdesc{}\global\renewcommand\ccsdesc[2][100]{}
\title{On a Problem of Steinhaus}
\author{Marcin Anholcer}
{Institute of Informatics and Electronic Economics, Pozna\'n University of Economics and Business, Pozna\'n, Poland}
{m.anholcer@ue.poznan.pl}
{https://orcid.org/0000-0001-7322-7095}
{Partially supported by the National Science Center of Poland under grant no.\ 2020/37/B/ST1/03298.}
\author{Bart{\l}omiej Bosek}
{Institute of Theoretical Computer Science, Faculty of Mathematics and Computer Science, Jagiellonian University, Krak{\'o}w, Poland}
{bartlomiej.bosek@uj.edu.pl}
{https://orcid.org/0000-0001-8756-3663}
{Partially supported by the National Science Center of Poland under grant no.\ 2019/35/B/ST6/02472.}
\author{Jaros{\l}aw Grytczuk}
{Faculty of Mathematics and Information Science, Warsaw University of Technology, Warsaw, Poland}
{j.grytczuk@mini.pw.edu.pl}
{https://orcid.org/0000-0002-0258-6143}
{Partially supported by the National Science Center of Poland under grant no.\ 2020/37/B/ST1/03298.}
\author{Grzegorz Gutowski}
{Institute of Theoretical Computer Science, Faculty of Mathematics and Computer Science, Jagiellonian University, Krak{\'o}w, Poland}
{grzegorz.gutowski@uj.edu.pl}
{https://orcid.org/0000-0003-3313-1237}
{Partially supported by the National Science Center of Poland under grant no.\ 2019/35/B/ST6/02472.}
\author{Jakub Przyby{\l}o}
{AGH University of Science and Technology, Faculty of Applied Mathematics, al.~A.~Mickiewicza~30, 30-059 Krak{\'o}w, Poland}
{jakubprz@agh.edu.pl}
{https://orcid.org/0000-0002-1262-7017}
{}
\author{Rafa{\l} Pyzik}
{Institute of Theoretical Computer Science, Faculty of Mathematics and Computer Science, Jagiellonian University, Krak{\'o}w, Poland}
{rafalpyzik@gmail.com}
{https://orcid.org/0000-0003-4147-7000}
{}
\author{Mariusz Zaj\k{a}c}
{Faculty of Mathematics and Information Science, Warsaw University of Technology, Warsaw, Poland}
{m.zajac@mini.pw.edu.pl}
{https://orcid.org/0000-0002-2080-9523}
{Partially supported by the National Science Center of Poland under grant no.\ 2019/35/B/ST6/02472.}
\authorrunning{M. Anholcer, B. Bosek, J. Grytczuk, G. Gutowski, J. Przyby{\l}o, R. Pyzik, M. Zaj\k{a}c}
\keywords{17 Points Problem, Irregularities of Distribution, Farey Sequences, Stick-breaking Sequences}
\let\leq\leqslant
\let\geq\geqslant
\let\rho\varrho
\newcommand{\brac}[1]{{\left(#1\right)}}
\newcommand{\lbrac}[1]{{\left[#1\right)}}
\newcommand{\sbrac}[1]{{\left[#1\right]}}
\newcommand{\set}[1]{\left\{#1\right\}}
\newcommand{\seq}[1]{\left(#1\right)}
\newcommand{\norm}[1]{{\left|#1\right|}}
\newcommand{\floor}[1]{{\left\lfloor #1 \right\rfloor}}
\newcommand{\ceil}[1]{{\left\lceil #1 \right\rceil}}
\newcommand{\Oh}[1]{O\brac{#1}}
\newcommand{\oh}[1]{o\brac{#1}}
\newcommand{\Real}{\mathbb{R}}
\newcommand{\PosReal}{\Real_{+}}
\newcommand{\Rational}{\mathbb{Q}}
\DeclareMathOperator{\FI}{FI}
\DeclareMathOperator{\AI}{AI}
\DeclareMathOperator{\FP}{FP}
\DeclareMathOperator{\modulo}{mod}
\DeclareMathOperator{\sort}{sorted}
\newcommand{\ie}{\textit{i}.\textit{e}., }
\begin{document}

\maketitle

\begin{abstract}
Let $N$ be a positive integer.
A sequence $X=(x_1,x_2,\ldots,x_N)$ of points in the unit interval $[0,1)$ is \emph{piercing} if $\{x_1,x_2,\ldots,x_n\}\cap \left[\frac{i}{n},\frac{i+1}{n} \right) \neq\emptyset$ holds for every $n=1,2,\ldots, N$ and every $i=0,1,\ldots,n-1$.
In 1958 Steinhaus asked whether piercing sequences can be arbitrarily long.
A negative answer was provided by Schinzel, who proved that any such sequence may have at most $74$ elements.
This was later improved to the best possible value of $17$ by Warmus, and independently by Berlekamp and Graham.

In this paper we study a more general variant of piercing sequences.
Let $f(n)\geq n$ be an infinite nondecreasing sequence of positive integers.
A sequence $X=(x_1,x_2,\ldots,x_{f(N)})$ is \emph{$f$-piercing} if $\{x_1,x_2,\ldots,x_{f(n)}\}\cap \left[\frac{i}{n},\frac{i+1}{n} \right) \neq\emptyset$ holds for every $n=1,2,\ldots, N$ and every $i=0,1,\ldots,n-1$.
A special case of $f(n)=n+d$, with $d$ a fixed nonnegative integer, was studied by Berlekamp and Graham.
They noticed that for each $d\geq 0$, the maximum length of any $(n+d)$-piercing sequence is finite.
Expressing this maximum length as $s(d)+d$, they obtained an exponential upper bound on the function $s(d)$, which was later improved to $s(d)=O(d^3)$ by Graham and Levy.
Recently, Konyagin proved that $2d\leqslant s(d)< 200d$ holds for all sufficiently big $d$.

Using a different technique based on the Farey fractions and stick-breaking games, we prove here that the function $s(d)$ satisfies $\floor{c_1d}\leqslant s(d)\leqslant c_2d+o(d)$ where $c_1=\frac{\ln 2}{1-\ln 2}\approx2.25$ and $c_2=\frac{1+\ln2}{1-\ln2}\approx5.52$.
We also prove that there exists an infinite $f$-piercing sequence with $f(n)= \gamma n+o(n)$ if and only if $\gamma\geq\frac{1}{\ln 2}\approx 1.44$.

\end{abstract}

\section{Introduction}
\label{sec:intro}

In his book \emph{Sto zada\'{n}}~\cite{Steinhaus58} from 1958, in English translation \emph{One Hundred Problems in Elementary Mathematics}~\cite{Steinhaus64}, Steinhaus posed the following problem (Problems~6 and~7 in Chapter~1):

Does there exist for every positive integer $N$ a sequence $X=(x_1,x_2,\ldots,x_N)$ of real numbers in $[0,1)$ such that $\{x_1,x_2,\ldots,x_n\}\cap \left[\frac{i}{n},\frac{i+1}{n} \right) \neq\emptyset$ holds for every $n=1,2,\ldots, N$ and every $i=0,1,\ldots,n-1$?

We call such sequences \emph{piercing} of \emph{order} $N$.
The first solution was given by Schinzel (see~\cite{Steinhaus64}) who proved that no piercing sequence can have more than $74$ elements.
Then, Warmus~\cite{Warmus76} gave a complete solution by proving that the longest piercing sequence has $17$ elements and constructing essentially all the possible solutions of which there are $768$.

In the present paper we study a more general variant of piercing sequences. Let $f(n)\geq n$ be an infinite nondecreasing sequence of positive integers.
A sequence $X=(x_1,x_2,\ldots,x_{f(N)})$ is called \emph{$f$-piercing} of \emph{order} $N$ if $\{x_1,x_2,\ldots,x_{f(n)}\}\cap \left[\frac{i}{n},\frac{i+1}{n} \right) \neq\emptyset$ holds for every $n=1,2,\ldots, N$ and every $i=0,1,\ldots,n-1$.
The original question of Steinhaus concerns the extremal case of $f(n)=n$.

A natural class of $f$-piercing sequences with the function $f(n)=n+d$, where $d$ is a fixed nonnegative integer, was introduced by Berlekamp and Graham~\cite{BerlekampG70}.
They noticed that a fundamental result of Roth~\cite{Roth54} on the discrepancy of sequences implies easily that for each $d\geq 0$, the maximum order of any $(n+d)$-piercing sequence is bounded.
Denoting this order by $s(d)$, they proved (independently of Warmus) that $s(0)=17$ and gave an exponential upper bound on $s(d)$.
Later, a proof of the polynomial upper bound $s(d)\leqslant O(d^3)$ was sketched by Graham~\cite{Graham13}.
A full proof of this bound was recently completed by Levy~\cite{Levy20}, who also provided a lower bound $s(d)\geqslant\Omega(\sqrt{d})$.
These results were very recently improved by Konyagin~\cite{Konyagin21}, who proved that $s(d)\geqslant 2d$ for all $d\geq 0$, and $s(d)<200d$ for all $d\geqslant 4\cdot10^{16}$.

Using different techniques, we obtain here a further improvement on the bounds for the function $s(d)$.
One of our main results reads as follows.

\begin{restatable}{theorem}{thms}\label{thm:s}
The function $s(d)$ satisfies $\floor{c_1d}\leqslant s(d)\leqslant c_2d+o(d)$ where $c_1=\frac{\ln 2}{1-\ln 2}\approx2.25$ and $c_2=\frac{1+\ln2}{1-\ln2}\approx5.52$.
\end{restatable}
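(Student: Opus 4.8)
The plan is to attack the two bounds in Theorem~\ref{thm:s} by quite different methods, both centered on the interplay between the partition points $\{i/n\}$ and the actual sequence positions $x_1,\ldots,x_{f(N)}$. For the lower bound $s(d)\geq \lfloor c_1 d\rfloor$, I would construct an explicit long $(n+d)$-piercing sequence. The natural framework is to think of the construction greedily as a \emph{stick-breaking game}: at step $k$ we place $x_k$, and the constraints imposed by levels $n\leq k-d$ are already "paid for" while levels $n>k-d$ are still free. The key is to organize the points according to the Farey sequence $F_n$ of order $n$: the mediant/stern-brocot structure tells us exactly which new subinterval $[i/n,(i+1)/n)$ becomes "endangered" when $n$ increases by $1$, namely the one created by inserting a new Farey fraction. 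Counting how many new intervals appear as $n$ runs from $1$ to $N$ and how much "slack" the parameter $d$ buys us, one should be able to show that as long as $N \leq c_1 d + o(d)$ the greedy/Farey strategy never gets stuck. The constant $c_1 = \frac{\ln 2}{1-\ln 2}$ should emerge from solving a recurrence of the form $N_{k+1} \approx N_k + $ (number of free slots), whose integral version is a logarithmic equation; I expect $\frac{\ln 2}{1-\ln 2}$ to be the fixed point of the map coming from "half the intervals at each scale need a fresh point."

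For the upper bound $s(d) \leq c_2 d + o(d)$, the strategy is a counting/double-counting argument showing that an $(n+d)$-piercing sequence of order $N$ forces $f(N) = N + d$ to be large compared to $N$. The idea: fix the final sequence $X = (x_1,\ldots,x_{N+d})$ and, for each level $n$ with $n$ in a suitable range (say $n \in (N/2, N]$ or a dyadic block $(N 2^{-j-1}, N2^{-j}]$), count how many of the points $x_1,\ldots,x_{n+d}$ must lie in each of the $n$ subintervals of length $1/n$. Since there are $n$ intervals and only $n+d$ points available, at most $d$ intervals can contain two or more of the relevant points; equivalently, for most intervals the piercing point is essentially unique and "locked." Tracking these locked points as $n$ decreases through a dyadic scale, each halving of $n$ roughly doubles the interval length, so a point locked at scale $n$ is "reused" at scale $n/2$ only half the time — the other half forces a brand-new point to appear among $x_{n/2+d+1}, \ldots$. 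Summing the new-point demand over all dyadic scales $n \approx N, N/2, N/4, \ldots$ gives a geometric-type series whose total is $(1+\ln 2)$ times something, and matching this against the only $d$ "extra" points available yields $N \leq \frac{1+\ln 2}{1-\ln 2} d + o(d)$. The logarithm enters because the number of dyadic scales is $\log_2 N$ and a harmonic-type sum $\sum 1/2^j$ interacts with the $\Theta(1)$ per-scale costs.

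The routine parts are: (i) verifying the Farey-mediant bookkeeping in the lower-bound construction (which new interval appears at each step, and that the point we place for it does not violate any earlier level), and (ii) the arithmetic of converting the per-scale new-point counts in the upper bound into the closed-form constant, including absorbing lower-order error into $o(d)$. The main obstacle I anticipate is in the upper bound: making rigorous the claim that a point "locked" at a fine scale $n$ genuinely forces a \emph{new} point at the coarser scale $n/2$ rather than being covered by reshuffling among already-placed points. One must argue that the at-most-$d$ "doubled" intervals at each scale cannot be cleverly aligned across all $\log_2 N$ scales simultaneously to avoid the cumulative demand — this is where a careful potential-function or amortized argument over the dyadic hierarchy is needed, and where I would expect to spend most of the technical effort. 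A secondary subtlety is handling the non-dyadic tail (levels $n$ that are not powers-of-two fractions of $N$) so that the error term is genuinely $o(d)$ and not $O(d)$ with a bad constant; I expect this to require summing over $\Theta(\log d)$ carefully chosen scales rather than exact dyadic ones, and invoking a discrepancy-type estimate (in the spirit of the Roth bound mentioned in the introduction) to control the accumulated slack.
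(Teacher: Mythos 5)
Both halves of your plan stop short of the actual proofs, and in each case the missing piece is the decisive one. For the lower bound, you never exhibit a sequence nor prove that the greedy Farey-guided stick-breaking construction survives up to order $c_1d$; the constant $\frac{\ln 2}{1-\ln 2}$ is only conjectured to ``emerge'' from an unspecified recurrence. The ingredient you are missing is a concrete sequence whose first roughly $n/\ln 2$ terms meet \emph{every} interval of length $\frac{1}{n}$: the de Bruijn--Erd\H{o}s sequence of fractional parts of $\log_2(2i+1)$ (equivalently, the stick-breaking strategy with ratio near $\frac12$ analysed in Theorem~\ref{Theorem Nonchalant}). Once such a strongly $\left\lceil n/\ln 2\right\rceil$-piercing sequence is in hand, the lower bound is a one-line verification: $n+d\geq\left\lceil n/\ln 2\right\rceil$ holds for every $n\leq\left\lfloor\frac{\ln 2}{1-\ln 2}d\right\rfloor$, so that sequence is already $(n+d)$-piercing of the required order. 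Without this object, nothing in your outline pins the fixed point of your recurrence to $\frac{\ln 2}{1-\ln 2}$ rather than some other constant, and you would at best get an asymptotic statement rather than the exact bound $\left\lfloor c_1d\right\rfloor\leq s(d)$.

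For the upper bound, the step you yourself flag as the main obstacle---that a point ``locked'' at scale $n$ forces a genuinely new point at scale $n/2$, and that the at most $d$ doubled intervals cannot be aligned across the $\log_2 N$ scales---is precisely the part that is not proved, and it is where the entire difficulty lives; a Roth-type discrepancy estimate will not rescue it (that route is what gave the earlier, much weaker polynomial bounds). The paper's argument avoids this cross-scale bookkeeping altogether using two ingredients absent from your plan. First, the de Bruijn--Erd\H{o}s gap lemma (Lemma~\ref{lem:gamma2n}): since the $N+1$ gaps determined by the first $N$ points sum to $1$ and each added point splits only one gap, $\sum_{n=N}^{2N}b_n\geq 1$, so for some $n\in[N,2N]$ the largest gap is at least about $\frac{1}{n\ln 2}$---the harmonic sum $H_{2N}-H_{N-1}\to\ln 2$, not a dyadic geometric series, is the true source of the constant. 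Second, the Farey covering lemma (Lemma~\ref{lem:intervals}) together with the bounded helper sets $H^W_r$, which upgrade piercing of the grid intervals $\left[\frac{i}{n},\frac{i+1}{n}\right)$ to control of \emph{arbitrary} gaps of length about $\frac{1}{n}$; this bridge is essential because the gap lemma concerns arbitrary gaps while your counting scheme only ever sees grid-aligned intervals. With these two pieces, the constant $\frac{1+\ln 2}{1-\ln 2}$ falls out of comparing the gap lower bound $\approx\frac{1}{l\ln 2}$ on a window $l\in[L,2L]$ with $2L\approx N+d$ against the upper bound $\approx\frac{1}{l-d}$ forced by piercing. As written, your double-counting scheme neither closes its central claim nor visibly produces that constant.
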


A natural question arising from the problem of Steinhaus is to determine the minimum growth of a function $f(n)$ allowing for arbitrarily long $f$-piercing sequences.
It is not hard to see that $f(n)=2n$ is one such function.
Actually, it allows even for an \emph{infinite} $f$-piercing sequence $X=(x_1,x_2,\ldots)$, defined by the natural condition that every prefix of $X$ of the form $(x_1,x_2,\ldots,x_{f(N)})$ is an $f$-piercing sequence of order $N$, for every $N\geqslant 1$.
We prove the following general result.

\begin{restatable}{theorem}{thmgood}\label{thm:good}
There exists an infinite $f$-piercing sequence with $f(n)= \gamma n+o(n)$ if and only if $\gamma\geq\frac{1}{\ln 2}\approx 1.44$.
\end{restatable}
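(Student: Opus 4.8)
The plan is to shed the two‑dimensional picture. A finite set $S\subseteq[0,1)$ pierces the partition into $n$ cells exactly when each cell $[i/n,(i+1)/n)$ meets $S$; writing $\{0,1\}\cup S$ in increasing order, this says that no gap between consecutive entries contains a whole cell. Since $f$ is nondecreasing and piercing only improves as points are appended, an infinite $f$-piercing sequence is precisely a rule that places points $x_1,x_2,\dots$ one at a time so that, with $g(k):=\max\{n:f(n)\le k\}$, the first $k$ points pierce every partition into $n\le g(k)$ cells; and if $f(n)=\gamma n+o(n)$ then $g(k)=\gamma^{-1}k+o(k)$. So I would work with the \emph{stick-breaking game}: starting from $[0,1]$, repeatedly break it at a new point; call a configuration with $k$ points \emph{$M$-good} if it pierces all partitions into at most $M$ cells; a legal infinite play keeps the configuration $g(k)$-good after every $k$ breaks. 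Theorem~\ref{thm:good} then says that the supremum of $\beta$ for which the configuration can be kept $\lfloor\beta k\rfloor$-good forever is $\ln2$ (so $\gamma_{\min}=1/\ln2$). The contrast worth keeping in mind: in the \emph{static} problem the $k$ centrally placed points $x_i=(2i-1)/(2k)$ are already $k$-good, so the whole gap between $\beta=1$ and $\beta=\ln2$ is the price of playing online.

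\textbf{The reach of a piece, via Farey fractions.} For a piece $P=(a,b]$ let $\mu(P)$ be the least $m$ such that some cell $[i/m,(i+1)/m]$ fits inside $P$ (equivalently, $(a,b]$ contains two consecutive multiples of $1/m$). One checks that a configuration is $M$-good iff every internal piece $P$ has $\mu(P)>M$ while the two boundary pieces have length $<1/M$, and that always $1/|P|\le\mu(P)\le2/|P|+O(1)$. The Farey sequence $F_N$ pins $\mu$ down: if $P$ spans a Farey interval $(p'/q',p''/q'']$, then $|P|=1/(q'q'')$ and $\mu(P)$ lies in $\big(q'q'',\,2q'q''\big)$, its exact value decided by how the multiples of $1/m$ near the mediant $(p'+p'')/(q'+q'')$ fall; breaking $P$ at that mediant gives children of lengths $1/(q'(q'+q''))$ and $1/(q''(q'+q''))$, so the reciprocal reaches are conserved, $\tfrac1{\mu(P_1)}+\tfrac1{\mu(P_2)}\approx\tfrac1{\mu(P)}$, the break being balanced iff $q'\approx q''$. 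This additive law is the engine of both directions.

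\textbf{Construction (``if'').} It is enough to treat $\gamma=1/\ln2$, larger $\gamma$ being then immediate. I would build the sequence by Farey-mediant subdivision, keeping the binary tree of pieces balanced, so that a piece spanning $(p'/q',p''/q'']$ with $q'\approx q''$ is broken at its mediant just before the deadline catches up to $q'q''\approx1/|P|$, each break then roughly doubling both children's reaches. A continuous/amortised analysis of this flow — pieces drifting through ever larger reaches, the conservation $\tfrac1{\mu_1}+\tfrac1{\mu_2}\approx\tfrac1{\mu_{\mathrm{par}}}$ at every break, one break creating two pieces — gives a transport equation whose self-similar solution uses about $H(r)^{-1}M$ pieces to stay $M$-good when the break ratio is $r$, where $H(r)=-r\ln r-(1-r)\ln(1-r)$. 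Choosing $r\approx\tfrac12$ uses $\tfrac1{\ln2}M+o(M)$ pieces, i.e. yields $f(n)=\tfrac1{\ln2}n+o(n)$; the constant is $\ln2=H(\tfrac12)=\max_rH(r)$, the binary entropy at its peak. The two boundary pieces are handled by breaking them early, and that the in-between configurations stay good is a monotonicity check.

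\textbf{Lower bound (``only if'') and the main obstacle.} Suppose a rule keeps the configuration $\lfloor\beta k\rfloor$-good forever; I want $\beta\le\ln2$. The key is that the configuration after $k$ breaks refines every earlier one, each of which was already good for its own deadline: a piece of length $\ell$ has $\mu\le2/\ell+O(1)$, so it must be broken before the deadline reaches $\approx2/\ell$. Reading the transport equation of the previous paragraph as an \emph{inequality} that every play must obey then forces $\beta\le\max_rH(r)=H(\tfrac12)=\ln2$. The delicate input, and where I expect the real work — the Farey‑theoretic core advertised in the abstract — to lie, is not so much the bound $\mu\le2/\ell+O(1)$ itself but the fact that its extra factor $2$ cannot be harvested in bulk: one must show by counting Farey fractions that the pieces whose reach is much larger than $1/|P|$ (those straddling a low‑denominator fraction with endpoints placed just so) carry negligible total length, so the conservation law holds with the correct constant in aggregate. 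Packaging this into a concave weight $\Phi$ on configurations that increases by at most $1$ per break and is at least $M/\ln2-o(M)$ on every $M$-good configuration — so that the transport estimate becomes a single inequality valid across all deadlines at once — is the crux.
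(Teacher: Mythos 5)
The gap is in the ``only if'' direction, and it is twofold. First, the step that is supposed to produce the constant --- ``reading the transport equation as an inequality then forces $\beta\le\max_r H(r)=\ln 2$'' --- is asserted, not derived: an adversary's play is not self-similar, need not break longest pieces, need not break at mediants, and obeys no flow equation; nothing in your sketch converts the per-break bookkeeping into a bound valid for arbitrary plays. Second, the packaging you identify as the crux is impossible as stated. You ask for a weight $\Phi$ on configurations that increases by at most $1$ per break and satisfies $\Phi\ge M/\ln 2-o(M)$ on every $M$-good configuration. But, as you yourself note, the $M$ equally spaced points $x_i=(2i-1)/(2M)$ form an $M$-good configuration, and it can be reached in $M$ breaks (the per-break increment bound knows nothing about intermediate deadlines), forcing $\Phi\le M+O(1)$ there --- contradicting $M/\ln 2\approx 1.44\,M$. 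So no such static potential exists: any correct lower bound must exploit the \emph{history}, i.e.\ that the configuration was good at all earlier deadlines simultaneously. That is exactly what the paper's argument (Lemma~\ref{lem:gamma2n}, due to de Bruijn and Erd\H{o}s) does: over a dyadic window $n\in[N,2N]$ each new point splits only one existing gap, so the maximal gaps satisfy $\sum_{n=N}^{2N}b_n\ge 1$, while $\sum_{n=N}^{2N}\frac1n\to\ln 2$; the constant $\ln 2$ comes from this harmonic sum, not from a binary-entropy maximization.

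The Farey-theoretic issue you flag (the factor $2$ between ``contains a grid cell'' and ``has length $\ge 1/n$'' cannot be harvested in bulk) is indeed the heart of the paper, but it is used in a different logical shape than your aggregate-length plan: Lemma~\ref{lem:intervals} shows that every interval of length $\frac1N$ either contains a whole Farey interval of denominator in $[\beta N,\alpha N]$ (with $\alpha=\frac{W+1}{W-1}$ close to $1$) or lies within $\frac{W+1}{N}$ of one of the $O(W^2)$ fractions of denominator $<W$, and Lemma~\ref{lem:goodgreat} then converts any $(\gamma n+o(n))$-piercing sequence into a \emph{strongly} $(\alpha^2\gamma n+o(n))$-piercing one by inserting $O(W^3\log N)$ patch points near those exceptional fractions; after this reduction the de Bruijn--Erd\H{o}s bound applies verbatim. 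You would need to supply an argument of comparable strength; none is given. For the ``if'' direction your plan is essentially the paper's nonchalant stick-breaking strategy with ratio near $\frac12$ (Theorem~\ref{Theorem Nonchalant}), but as written it is only a heuristic (``transport equation'', ``self-similar solution''), and the insistence on mediant breaks is self-defeating for the balance assumption: intervals adjacent to low-denominator fractions, e.g.\ $(0/1,1/q)$, split at the mediant in ratio $1:q$, not $\approx 1:1$. This half is repairable --- break anywhere in ratio $\approx\frac12$, or quote the explicit sequence of fractional parts of $\log_2(2i-1)$ --- but as it stands neither direction is a proof.
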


The proof uses the following more general idea of \emph{strongly} piercing sequences.

A sequence $X=(x_1,x_2,\ldots,x_{f(N)})$ is called \emph{strongly $f$-piercing} of \emph{order} $N$ if $\{x_1,x_2,\ldots,x_{f(n)}\}\cap \left[y,y+\frac{1}{n}\right) \neq\emptyset$ holds for every $n=1,2,\ldots, N$ and every real $0\leqslant y \leqslant 1-\frac{1}{n}$, \ie{} $\{x_1,x_2,\ldots,x_{f(n)}\}$ intersects every interval of length $\frac{1}{n}$ that is contained in $[0,1)$.
Clearly, any strongly $f$-piercing sequence is $f$-piercing in the former sense.
Also, we may define analogously infinite sequences with this stronger piercing property.

We will prove in Section 2 that the assertion of \autoref{thm:good} holds for strongly piercing sequences.
Actually, the same result was obtained by de Bruijn and Erd\H{o}s~\cite{BruijnE49} already in 1948.
Next, in Section 3 we explore connections between piercing and strongly piercing sequences more closely, using as a tool the well known \emph{Farey fractions}.
This will allow for deriving the \emph{only if} part of \autoref{thm:good} and the proof of \autoref{thm:s} in Section 4.
In the last section we pose some open problems.

The original problem of Steinhaus is connected to some other, quite famous topics. Indeed, it is visibly related to the well known ``three gaps theorem'' conjectured by him in the mid-1950's, and proved shortly thereafter by many authors, including S\'{o}s~\cite{Sos} and  \'{S}wierczkowski~\cite{Swierczkowski}. The theorem says that for every real number $\alpha>0$ and every natural $N$, the set of points $\{\alpha n~(\modulo 1):n=1,2,\dots,N\}$ splits the unit interval into segments of at most three different lengths. In a less formal way, when you cut a circular cake by turning the knife around by the constant angle, you will always have pieces of at most three different sizes. This illustration resembles the celebrated problem of fair division---a topic of fundamental importance in game theory and economics, studied and popularized by Steinhaus~\cite{SteinhausFD} a decade earlier. Other possible inspirations can be perceived in his work~\cite{SteinhausZM} devoted to quite practical applications of mathematics in some problems of ``measurement''.

\section{Strongly piercing sequences via stick-breaking games}
\label{sec:great}

Consider the following \emph{stick-breaking game}.
At the beginning we have a segment of unit length.
In each subsequent round we choose one of the existing segments and break it into two subsegments.
Before the $k$-th round we have exactly $k$ segments with the total length always equal to one.
Let $M_k$ denote the maximum length of a segment before the $k$-th round.
Our goal in the game is to minimize the value of the limit $\gamma=\limsup (kM_k)$ as $k\rightarrow \infty$.
Notice that $kM_k$ is equal to the ratio $\frac{M_k}{M}$, where $M=\frac{1}{k}$ is the average length of a segment before the $k$-th round.

The idea of studying uniform distribution of a sequence of points via stick-breaking games was explored earlier by Ramshaw~\cite{Ramshaw78}.
Clearly, any infinite sequence $X=(x_1,x_2,\ldots)$ of points in the unit segment determines uniquely a strategy in the stick-breaking game, and vice versa, any stick-breaking strategy gives rise to a unique sequence $X$.
Moreover, it is not hard to check that the sequence $X$ is strongly $(\gamma n+o(n))$-piercing if and only if $\gamma\geq\limsup_{k\rightarrow \infty}(kM_k)$.

Consider now the following stick-breaking strategy.
In each round we choose one of the longest segments and break it into two in the ratio $(1-r):r$, where $0<r\leq \frac{1}{2}$ is a fixed number.
We call this strategy \emph{nonchalant} with \emph{parameter} $r$.

\begin{example}\label{one half}
Let $r=\frac{1}{2}$. In this case all segment lengths in every round are integer powers of $2$,
in particular before $k$-th round, the longest one has length
$M_k=2^{-\lfloor \log_2 k\rfloor}$. For every natural $n$ the number $kM_k$ grows from $1$
for $k=2^n$ to $2-2^{-n}$ for $k=2^{n+1}-1$, hence
$\gamma=\limsup_{k\rightarrow \infty}(kM_k)=2$ and $\delta=\liminf_{k\rightarrow \infty}(kM_k)=1$. Note that the rounds
when there is exactly one longest segment remaining
(e.g. one of length $\frac{1}{8}$ and $14$ of length $\frac{1}{16}$ before round $15$)

are the ones that are important for computing the upper limit, while the immediately following rounds determine the lower limit (e.g. after round $15$ there are $16$ segments of length $\frac{1}{16}$).
\end{example}

In order to obtain lower values of $\gamma$ we will also consider strategies with
a changing parameter. Starting with the segment of length 1 we will
apply the nonchalant strategy with parameter $r_1$, but after some number
of rounds (to be determined more precisely below, but rather large)
we will change the parameter and begin to break the currently longest segment nonchalantly
in the ratio $(1-r_2):r_2$ for some time,  etc. That will be called
the \emph{lazy} strategy corresponding to the sequence $(r_n)$.

\begin{theorem}\label{Theorem Nonchalant}
For every $0<r\leq\frac{1}{2}$ there exists an increasing sequence
$r_n \rightarrow r$ and a corresponding lazy strategy, such that $$\limsup_{k\rightarrow \infty}(kM_k)= -\frac{1}{r \ln r + (1-r) \ln (1-r)}\textrm{.}$$
\end{theorem}
\begin{proof}
First we show the upper and lower limits of the ratio $\frac{M_k}{M}$ as $k\rightarrow\infty$ for a particular class of nonchalant strategies with $r<\frac{1}{2}$.
For that purpose we are going to define a corresponding continuous process of cell splitting in time. In the original setting, every cell (segment) is characterized by its length $d$, and, as the number of round $k$ increases, the segments with the highest $d$ split. Instead of that, we will consider the number $-\ln d$ as the moment in time when the cell of length $d$ splits. Equivalently, considering the state of the game before round $k$, i.e. at the moment when the longest segment has length $M_k$ and is just about to split, we see that the quantity $\tau=-\ln \left(\frac{d}{M_k}\right)$ is the remaining lifetime of any cell of length $d$. Note that all cells with the same highest length $d=M_k$ have, consistently with the above, the remaining lifetimes $\tau=0$, and we treat them as splitting one by one in consecutive rounds, although those rounds are played at the same moment of time.

The process thus begins with one cell that divides at time $t=0$ and each cell, corresponding to a segment of length $d$, splits after the prescribed time into two parts, one of which can be called long-lived, as it corresponds to the subsegment of length $dr$, and will live for the time equal to $t_1=-\ln r$, and the other one short-lived, analogously destined to survive the time $t_2=-\ln(1-r)$, where $t_2<t_1$, as $r<\frac{1}{2}$.

The average length of the interval is currently the arithmetic mean of the expressions $e^{-\tau}$ over all existing cells under the condition that the oldest ones (meaning the longest intervals in the original process) have $\tau=0$ and are in the process of splitting. Denoting the mean by $E$ we have thus $E(e^{-\tau})=E\left(\frac{d}{M_k}\right)=\frac{M}{M_k}$.

Let us now assume that $\frac{t_2}{t_1}$ is a rational number, equal to $\frac{q}{p}$ with $\gcd(p,q)=1$, which lets us define $t=\frac{t_1}{p}=\frac{t_2}{q}$ as the unit of time. In other words the lifetimes and
the splitting moments of all cells are integer multiples of $t$.

We will focus on the number $d_n$ of cells in the $n^{th}$ \emph{generation}, by which we mean the cells that split at the moment $nt$. By the construction, these are exactly the short-lived cells born $q$ units earlier and the long-lived ones born $p$ units earlier, i.e.  $d_n=d_{n-q}+d_{n-p}$ for $n \geq 1$, which together with the initial conditions $d_0=1$ and
$d_{-i}=0$ for $1 \leq i \leq p-1$ gives us the generating function
$$
D(x)=\Sigma d_n x^n = \frac{1}{1-x^p-x^q}\textrm{.}
$$
We claim that $d_n = (C+\oh{1}) \beta^{-n}$ where $C \not= 0$
and $\beta=e^{-t}$ (implying $\beta^p=r$, $\beta^q=1-r$,
and $\beta^p+\beta^q=1$). To that end let us state several
elementary properties of the complex polynomial $W(z)=z^p+z^q-1$.
\begin{itemize}
\item $W$ has no rational roots. Indeed, by the rational root theorem the only candidates are $\pm 1$, but they are not roots of $W$.
\item $W$ increases on $\PosReal$ and therefore has exactly one positive real root, which is equal to $\beta$.
\item $W$ has no multiple roots, either real or not. In fact, the equality $W(z)=W'(z)=0$ implies that
$z^p$ and $z^q$ are rational, which by $\gcd(p,q)=1$ can only be true if $z$ is rational, and that is impossible.\\
\item No complex number $z$ with $|z| \leq \beta$ except $z=\beta$ itself is a root of $W$. This follows from the fact that the inequalities in $1 = |z^p+z^q| \leq |z^p|+|z^q| \leq \beta^p+\beta^q = 1$ become equalities only if $|z| = \beta$ and the ratio $\frac{z^p}{z^q}$ is a positive real, which means that $z$ is a positive real.
\end{itemize}

The above properties imply that the partial fraction decomposition of $D(x)$ has the form:
$$
D(x) = \frac{A}{x-\beta} + \frac{A_2}{x-z_2} + \ldots
+ \frac{A_p}{x-z_p},
$$
where the $z_i$'s are the remaining roots of $W$, and that $d_n = C \beta^{-n} + \oh{\beta^{-n}}$. The last part of the claim, stating that $C \not= 0$ can be proved by contradiction: if $C=0$ then $\lim_{n \rightarrow \infty} d_n \beta^n =0$.
In other words, the total length of the segments of generation $n$ would tend to $0$ as $n \rightarrow \infty$. That is, however, not possible because at most $p+1$ generations of cells/segments are alive at any particular moment, while the total length of all segments is constantly equal to the positive length of the original segment, hence it does not tend to $0$.

It should be noted that we actually proved a little more, namely that the asymptotics $d_n \sim \beta^{-n}$ of the solution of $d_n=d_{n-q}+d_{n-p}$ holds for all non-negative initial conditions, provided that $d_n$ is not the constant $0$ sequence.

In order to compute the average value of $e^{-\tau}$ at the moment when the last cell belonging to the generation
splitting at the time $nt$ is about to disappear (cf. Example~\ref{one half}), it suffices to count how many cells belong to each generation. Here is the detailed bookkeeping: beside the last cell mentioned above, whose remaining lifetime is 0, we have $d_n-1$ newborn long-lived cells with remaining lifetime $pt$, $d_{n-1}$ long lived-cells born one unit earlier with remaining lifetime $(p-1)t$ etc. up to $d_{n-p+1}$ long-lived cells that are going to
split next time, their remaining lifetime being $t$, and $q$ generations of short-lived cells with analogous cardinalities and lifetimes. The result is:
$$
E_n(e^{-\tau}) = \frac{1+ \sum_{i=0}^{p-1}\beta^{p-i}d_{n-i} - \beta^p + \sum_{i=0}^{q-1}\beta^{q-i}d_{n-i} - \beta^q}
{1+ \sum_{i=0}^{p-1}d_{n-i} - 1 + \sum_{i=0}^{q-1}d_{n-i} - 1}.
$$

Knowing that $\frac{d_{n-i}}{d_n}\rightarrow\beta^i$ as $n \rightarrow \infty$ and recalling that $\beta^p+\beta^q=1$, we can write

$$
\lim_{n \rightarrow \infty}E_n(e^{-\tau}) = \frac{\sum_{i=0}^{p-1}\beta^{p-i}\beta^i + \sum_{i=0}^{q-1}\beta^{q-i}\beta^i}
{\sum_{i=0}^{p-1}\beta^i + \sum_{i=0}^{q-1}\beta^i} =
\frac{(1-\beta)(p\beta^p+q\beta^q)}{1-\beta^p+1-\beta^q}=
(1-\beta)(p\beta^p+q\beta^q)=
$$
$$
=-\frac{1-e^{-t}}{t} (r \ln r + (1-r) \ln (1-r)).
$$
Since $E(e^{-\tau})=\frac{M}{M_k}=kM_k$, we have
$$\gamma(r)=\limsup_{k\rightarrow\infty} (kM_k)=-\frac{1}{r \ln r + (1-r) \ln (1-r)}\frac{t}{1-e^{-t}}.$$
In an analogous fashion we obtain the formula
$$\delta(r)=\liminf_{k\rightarrow\infty} (kM_k)=-\frac{1}{r \ln r + (1-r) \ln (1-r)}\frac{t}{e^t-1}.$$

This ends our analysis of nonchalant strategies with parameter $r$ satisfying
$\frac{\ln (1-r)}{\ln r} \in \Rational$. To finish the proof note that for any positive
value of $r \leq \frac{1}{2}$, no matter whether $\frac{\ln (1-r)}{\ln r}$ is rational or not,
we can construct such a sequence $r_n \rightarrow r$ of numbers less than $\frac{1}{2}$, that the sequence $\frac{q_n}{p_n} = \frac{\ln (1-r_n) }{ \ln r_n}$ is an increasing
sequence of rational numbers approximating $\frac{\ln (1-r) }{ \ln r}$ with $p_n\rightarrow\infty$. To be more specific we can use the sequence of binary approximations, meaning that the numbers $p_n$ will be powers of $2$.

If we now apply a lazy strategy in the following way: first use $r=r_1$ for such a long time that
$kM_k$ will be as close to its lower limit
$$-\frac{1}{r_1 \ln r_1 + (1-r_1) \ln (1-r_1)}\frac{t_1}{e^{t_1}-1}$$
as we wish (e.g. closer than $\frac{1}{10}$), then switch to $r=r_2$ and continue till $kM_k$ becomes
less than
$$-\frac{1}{r_2 \ln r_2 + (1-r_2) \ln (1-r_2)}\frac{t_2}{e^{t_2}-1}+\frac{1}{100},$$
etc.
It has to be stressed that the nonchalant strategy with parameter $r_n$ uses as its starting point the set of cells (segments) generated by the strategy with parameter $r_{n-1}$. It is true that the units of time $t_n$ and $t_{n-1}$ are different, and that the expected times of splits may be non-integer numbers when recalculated in the new units, however that does not pose any special problems -- we can say that cells with different fractional parts of their splitting times form separate nonchalant subprocesses with parameter $r_n$ that are activated in cyclic order.
The average length of a segment is then a weighted mean of averages in those processes, and accordingly both $\limsup$ and $\liminf$ of $kM_k$ belong to the interval $[ \delta(r_n), \gamma(r_n) ]$.

Since the sequence $p_n$ tends to infinity, the sequence of time units $t_n=-\frac{\ln r_n}{p_n}$ tends to $0$, which implies $\frac{t_n}{(1-e^{-t_n})} \rightarrow 1$ and $\frac{t_n}{(e^{t_n}-1)} \rightarrow 1$, and thus in the limit we obtain
$$
\limsup_{k\rightarrow\infty} (kM_k) = \liminf_{k\rightarrow\infty} (kM_k) = -\frac{1}{r \ln r + (1-r) \ln (1-r)},
$$
as asserted.
\end{proof}

The following result due to de Bruijn and Erd\H{o}s~\cite{BruijnE49} can now be seen as the special case
of Theorem~\ref{Theorem Nonchalant} for $r=\frac{1}{2}$.

\begin{corollary}[de Bruijn and Erd\H{o}s~\cite{BruijnE49}]\label{Corollary dBE}
There exists a strongly $f$-piercing sequence with $f(n)=\frac{1}{\ln 2}n+o(n)$.
\end{corollary}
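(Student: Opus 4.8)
The plan is to derive Corollary~\ref{Corollary dBE} from Theorem~\ref{Theorem Nonchalant} by optimizing the parameter $r$. Theorem~\ref{Theorem Nonchalant} tells us that the nonchalant strategy with irrational parameter $0<r<\frac12$ yields
$$\gamma(r)=-\frac{1}{r\ln r+(1-r)\ln(1-r)}.$$
First I would observe that the denominator $H(r):=-\brac{r\ln r+(1-r)\ln(1-r)}$ is precisely the binary entropy function (in natural log), which is strictly concave on $(0,1)$, vanishes at the endpoints, and attains its unique maximum $H(1/2)=\ln 2$ at $r=1/2$. Hence $\gamma(r)=1/H(r)$ is minimized as $r\to 1/2$, with infimum $\frac{1}{\ln 2}$. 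Since we are restricted to \emph{irrational} $r$ strictly below $1/2$, we cannot take $r=1/2$ directly, but we can pick a sequence of irrational parameters $r_m\to 1/2^-$; by continuity of $\gamma$ we get $\gamma(r_m)\to\frac{1}{\ln 2}$.

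The second step is to convert a single nonchalant strategy into the desired piercing statement. Recall from the discussion preceding Theorem~\ref{Theorem Nonchalant} that any infinite stick-breaking strategy corresponds to an infinite sequence $X$ of points, and $X$ is strongly $(\gamma n+o(n))$-piercing if and only if $\gamma\geq\limsup_{k\to\infty}(kM_k)$. So for a fixed irrational $r<1/2$ close to $1/2$, the nonchalant strategy with parameter $r$ produces a strongly $(\gamma(r)n+o(n))$-piercing sequence. Choosing $r$ close enough to $1/2$ makes $\gamma(r)$ as close to $\frac{1}{\ln 2}$ as we like, but this only gives, for every $\epsi>0$, a strongly $\brac{(\frac{1}{\ln 2}+\epsi)n+o(n)}$-piercing sequence — not quite the claimed $f(n)=\frac{1}{\ln 2}n+o(n)$.

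The main obstacle, therefore, is upgrading ``$\frac{1}{\ln 2}+\epsi$ for every $\epsi$'' to a single sequence achieving exactly $\frac{1}{\ln 2}$ in the $o(n)$ sense. I would handle this by a diagonalization over a sequence $r_1,r_2,\ldots$ of irrational parameters with $r_m\to 1/2$ and correspondingly $\gamma(r_m)\to\frac{1}{\ln 2}$: run the nonchalant strategy with parameter $r_1$ for a long initial block of rounds, then switch to parameter $r_2$ for a much longer block, and so on, choosing the block lengths to grow fast enough that within the $m$-th block the ratio $kM_k$ has settled to within $1/m$ of $\gamma(r_m)$. One must check that switching parameters mid-game does not spoil the analysis — the key point is that $M_k$ depends only on the current multiset of segment lengths, and after the transient at each switch the same renewal/generating-function asymptotics from the proof of Theorem~\ref{Theorem Nonchalant} take over with the new $r$; since $\gamma(r_m)\to\frac{1}{\ln 2}$, we get $\limsup_{k\to\infty}(kM_k)=\frac{1}{\ln 2}$, and the equivalence stated before Theorem~\ref{Theorem Nonchalant} then gives a strongly $\brac{\frac{1}{\ln 2}n+o(n)}$-piercing sequence, as required.
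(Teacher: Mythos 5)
Your proposal is essentially the paper's own argument: the paper likewise takes a sequence of irrational parameters $r_m\to\frac12$ and invokes Theorem~\ref{Theorem Nonchalant} together with continuity of $-\brac{r\ln r+(1-r)\ln(1-r)}$ to conclude $\limsup_k (kM_k)=\frac{1}{\ln 2}$. Your explicit block-diagonalization (and the remark that the renewal asymptotics must survive the parameter switches) only spells out what the paper's one-line proof leaves implicit, so the approach is the same and at least as carefully presented.
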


The original proof of this statement in~\cite{BruijnE49} provides an elegant explicit example of the desired sequence.
Namely, it is proved there that the sequence of fractional parts of the numbers $x_i=\log_2(2i-1)$, $i=1,2,\ldots$, has the claimed strong piercing property.
It is also proved in~\cite{BruijnE49} that the constant $\frac{1}{\ln 2}$ in this result is optimal.
For the sake of completeness, we present the proof of de Bruijn and Erd\H{o}s of this fact.
The key Lemma~\ref{lem:gamma2n} implying it will also play a role in our proof of Theorem~\ref{thm:s}.

For any positive integer $n$, let $H_n$ denote the $n$-th harmonic number, \ie{} $H_n=\sum_{i=1}^n\frac{1}{i}$.
\begin{lemma}[de Bruijn and Erd\H{o}s~\cite{BruijnE49}]\label{lem:gamma2n}
For every integer $N\geq 2$, define $\gamma_N = \frac{1}{H_{2N}-H_{N-1}}$.
For every sequence $X=\seq{x_1,x_2,\ldots,x_{2N}}$ of reals in $[0,1)$,
for every $N \leq n \leq 2N$, let $Y^n$ be the sequence of numbers $0,1,x_1,x_2,\ldots,x_n$ arranged in increasing order, \ie{} $Y^n=\seq{y^n_1,y^n_2,\ldots,y^n_{n+2}} = \sort\seq{0,1,x_1,x_2,\ldots,x_{n}}$, and $b_n$ be the maximum difference between any two consecutive elements of $Y^n$, \ie{} $b_n=\max_{i=1,2,\ldots,n+1} \brac{y^n_{i+1} - y^n_{i}}$.
Then, for at least one $N \leq n \leq 2N$, we have $b_n \geq \gamma_N\cdot\frac{1}{n}$.
\end{lemma}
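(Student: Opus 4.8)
The plan is to argue by contradiction: suppose that $b_n < \gamma_N \cdot \frac{1}{n}$ for every $N \leq n \leq 2N$, and derive a contradiction by a length‑counting argument. The key observation is that passing from $Y^n$ to $Y^{n+1}$ amounts to inserting one new point $x_{n+1}$ into the existing sorted configuration, which splits exactly one gap of $Y^n$ into two gaps of $Y^{n+1}$; all other gaps are unchanged. Thus the multiset of gap lengths only ever gets refined as $n$ increases.

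First I would set up the following inequality. At stage $n = N$, the sequence $Y^N$ has $N+1$ gaps summing to $1$. At stage $n = 2N$, the sequence $Y^{2N}$ has $2N+1$ gaps summing to $1$. Along the way, from stage $n$ to stage $n+1$ we perform one split; under the assumption $b_n < \gamma_N/n$, the gap we split has length less than $\gamma_N/n$. The idea is to bound the total length $1 = \sum (\text{gaps of } Y^{2N})$ from above. I would track, for each $n$ from $N$ to $2N-1$, the length removed and the two pieces added; since a split of a segment of length $\ell < \gamma_N/n$ produces two segments each of length $< \gamma_N/n$, the crucial point is that after the split at stage $n$, no gap exceeds $\max(b_n, \text{pieces created at earlier stages})$. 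More carefully, I would show by downward/upward bookkeeping that the sum of the $N+1$ largest gaps present at stage $2N$ is governed by the harmonic‑type sum: each of the $N$ splits performed (at $n = N, N+1, \ldots, 2N-1$) replaces a long gap of length $< \gamma_N/n$ by shorter ones, so the $N+1$ "surviving" originally‑present gaps from stage $N$ together with the newly created short gaps force $1 = \sum_{\text{gaps at } 2N} (\cdot)$ to be strictly less than a sum of the form $\gamma_N \sum_{n=N}^{2N} \frac{1}{n} = \gamma_N(H_{2N} - H_{N-1}) = 1$, a contradiction.

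To make the bookkeeping precise I would use the following cleaner formulation. For $N \leq n \leq 2N$, among the $n+1$ gaps of $Y^n$ consider the $N+1$ largest, and let $B_n$ denote their sum; then $B_N \geq B_{N+1} \geq \cdots$? — no, rather the reverse comparison is needed, so instead I would argue that the largest gap is destroyed at each step and replaced by two strictly smaller ones, hence there must be at least $n - N + 1$ distinct gaps at stage $2N$ each of which "was the maximum" at some stage $n \in \{N, \ldots, 2N-1, 2N\}$ or is a descendant of such, and then sum the upper bounds $b_n < \gamma_N/n$. The point is that the $2N+1$ gaps of $Y^{2N}$ can be partitioned (or injectively assigned) so that $1 = \sum_{\text{all gaps at }2N}(\cdot) < \sum_{n=N}^{2N} \frac{\gamma_N}{n} = \gamma_N(H_{2N} - H_{N-1}) = 1$, the desired contradiction.

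The main obstacle will be the combinatorial accounting in the last step: making rigorous the claim that the $2N+1$ gaps present at stage $2N$ admit an assignment to the $N+1$ indices $n \in \{N, \ldots, 2N\}$ under which the total length is strictly bounded by $\sum_{n=N}^{2N} \gamma_N/n$. One needs to exploit that at each of the $N$ intermediate steps the gap that is split is a longest one (length $b_n < \gamma_N/n$), so its two children have length $< \gamma_N/n$ as well, and that the gaps never merge — so by a careful induction (tracking which current gaps are descendants of which step's split) one can charge the total length $1$ against the bound $\gamma_N \sum_{n=N}^{2N}\frac1n$. I would carry out this induction on $n$, maintaining the invariant that after stage $n$ the sum of all current gap lengths that are "new since stage $N$" plus the surviving ones can be charged to $\gamma_N \sum_{m=N}^{n} \frac1m$ minus the length still to be accounted, and push it through to $n = 2N$.
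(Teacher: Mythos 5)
You have the right skeleton (contradiction hypothesis, one insertion splits exactly one gap, and the harmonic sum $\gamma_N\sum_{n=N}^{2N}\frac1n=\gamma_N(H_{2N}-H_{N-1})=1$), but the decisive combinatorial step is exactly the part you leave open, and the charging you sketch points in a direction that does not work. You propose to assign the $2N+1$ gaps of $Y^{2N}$ to the indices $n=N,\ldots,2N$ and bound their total length by $\gamma_N\sum_{n=N}^{2N}\frac1n$. But under the contradiction hypothesis the only information available about the gaps that exist at stage $2N$ is that each is smaller than $b_{2N}<\gamma_N\cdot\frac{1}{2N}$; this yields only $1<(2N+1)\cdot\frac{\gamma_N}{2N}$, which is no contradiction, and no refinement of ``descendant tracking'' of the final gaps fixes this, because the lengths at stage $2N$ simply cannot be tied to the larger bounds $\gamma_N/n$ for $n<2N$ in a disjoint way. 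Moreover, your bookkeeping repeatedly assumes that the gap split at stage $n$ is a longest one (``the largest gap is destroyed at each step''); that is the stick-breaking \emph{strategy} of Section~2, not the hypothesis of this lemma, which concerns an arbitrary sequence $X$: the point $x_{n+1}$ may land in any gap whatsoever.

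The missing idea is to charge in the opposite direction: bound the \emph{original} gaps of $Y^N$ by the \emph{later} maxima. Let $d_1\geq d_2\geq\ldots\geq d_{N+1}$ be the gaps of $Y^N$ sorted in non-increasing order, so $d_1+\cdots+d_{N+1}=1$. After inserting $k$ further points (stage $N+k$), at most $k$ of these $N+1$ original gaps have been subdivided, so at least one of the $k+1$ largest of them is still intact; hence $b_{N+k}\geq d_{k+1}$ for $k=0,1,\ldots,N$ (in particular $b_N=d_1$). Summing gives $1=\sum_{i=1}^{N+1}d_i\leq\sum_{n=N}^{2N}b_n<\gamma_N\sum_{n=N}^{2N}\frac1n=\gamma_N(H_{2N}-H_{N-1})=1$, the desired contradiction. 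This pigeonhole observation ($b_{N+k}\geq d_{k+1}$) is the entire content of the paper's proof and is what your ``careful induction'' would have to reduce to; without it, your argument as written does not close.
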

\begin{proof}
Assume to the contrary that $b_n < \gamma_N\cdot\frac{1}{n}$ holds for every $N \leq n \leq 2N$.
Let $d_1 \leq d_2 \leq \ldots \leq d_{N+1}$ be the sequence of differences between consecutive elements of $Y^N$ arranged in increasing order, \ie{} $\seq{d_1,d_2,\ldots,d_{N+1}} = \sort\seq{y^N_2-y^N_1,y^N_3-y^N_2,\ldots,y^N_{N+2}-y^N_{N+1}}$.
First, observe that $d_1+d_2+\ldots+d_{N+1} = 1$.
Second, as adding one point to the sequence splits only one difference between two consecutive elements, we have $b_N = d_{N+1}$, $b_{N+1} \geq d_{N}$, $b_{N+2} \geq d_{N-1}$, $\ldots$, $b_{2N} \geq d_{1}$.
Thus, we get
$$1 = \sum_{i=1}^{N+1} d_i \leq \sum_{n=N}^{2N} b_n < \gamma_N\sum_{n=N}^{2N}\frac{1}{n} = \gamma_N \brac{H_{2N}-H_{N-1}}\textrm{,}$$
which contradicts the choice of $\gamma_N$.
\end{proof}

\begin{theorem}[de Bruijn and Erd\H{o}s~\cite{BruijnE49}]\label{Theorem dBE}
There does not exist a strongly $f$-piercing sequence with $f(n)=\gamma n+o(n)$ for any real number $\gamma<\frac{1}{\ln 2}$.
\end{theorem}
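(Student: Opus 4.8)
The plan is to argue by contradiction, with \autoref{lem:gamma2n} as the engine. Suppose $X=\seq{x_1,x_2,\ldots}$ is an infinite strongly $f$-piercing sequence with $f(n)=\gamma n+o(n)$ for some real $\gamma<\frac{1}{\ln 2}$; note that $\gamma\geq 1$, since otherwise $f(n)<n$ for all large $n$, contrary to $f(n)\geq n$. First I would translate strong piercing into a statement about gaps: for every integer $m\geq 1$, every consecutive gap of $\sort\seq{0,1,x_1,\ldots,x_{f(m)}}$ is at most $\frac1m$. Indeed, if two consecutive sorted values $u<v$ had $v-u>\frac1m$, then any real $y$ with $u<y<v-\frac1m$ satisfies $0\leq y\leq 1-\frac1m$ and $\lbrac{y,y+\frac1m}\subseteq\brac{u,v}$, so the length-$\frac1m$ interval $\lbrac{y,y+\frac1m}$ would contain none of $x_1,\ldots,x_{f(m)}$, contradicting that $X$ is strongly $f$-piercing. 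Since inserting a point into a sorted list only subdivides a single gap, the maximal gap $b_n$ of $\sort\seq{0,1,x_1,\ldots,x_n}$ (the quantity from \autoref{lem:gamma2n}) is nonincreasing in $n$, and hence $b_n\leq\frac1m$ whenever $n\geq f(m)$.

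Then I would fix a large $N\geq 2$ and apply \autoref{lem:gamma2n} to the prefix $\seq{x_1,\ldots,x_{2N}}$ of $X$, obtaining an index $n=n(N)$ with $N\leq n\leq 2N$ and $b_n\geq\gamma_N\cdot\frac1n$, where $\gamma_N=\frac{1}{H_{2N}-H_{N-1}}$. Let $m=m(N)$ be the largest integer with $f(m)\leq n$ (well defined since $f$ is nondecreasing and unbounded and $n\geq N$). From the first step, $b_n\leq\frac1m$, so $\gamma_N\leq\frac nm$. On the other hand, maximality of $m$ gives $n<f(m+1)=\gamma(m+1)+o(m)$, hence $\frac nm<\gamma\brac{1+\frac1m}+o(1)$ as $m\to\infty$, while $m(N)\to\infty$ as $N\to\infty$ because $f(m+1)>n\geq N$ and $f$ is a fixed nondecreasing unbounded function. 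Combining these, $\gamma_N\leq\gamma\brac{1+\frac1m}+o(1)$; letting $N\to\infty$, so that $m\to\infty$, and using $H_{2N}-H_{N-1}=\brac{H_{2N}-H_N}+\frac1N\to\ln 2$ and therefore $\gamma_N\to\frac{1}{\ln 2}$, we would get $\frac{1}{\ln 2}\leq\gamma$, contradicting $\gamma<\frac{1}{\ln 2}$.

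The only step that is not entirely routine is the rescaling bookkeeping at the end: \autoref{lem:gamma2n} produces an index $n$ counting \emph{points}, whereas the scale at which those $n$ points must pierce is $m\approx n/\gamma$, and one has to check that all the accumulated slack (the factor $1+\frac1m$ and the two $o(\cdot)$ terms) really vanishes in the limit — which comes down to the immediate fact that $m(N)\to\infty$. For completeness I would add a remark that, to rule out \emph{arbitrarily long finite} strongly $f$-piercing sequences rather than an infinite one, a standard compactness argument in $[0,1]^{\Nat}$ reduces to the case treated above.
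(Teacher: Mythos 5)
Your proposal is correct and follows essentially the same route as the paper: Lemma~\ref{lem:gamma2n} is the engine, strong piercing is translated into a bound on the maximal gap of the sorted prefixes, and the two scales ($n$ points versus piercing order $m\approx n/\gamma$) are compared in the limit $N\to\infty$ using $H_{2N}-H_{N-1}\to\ln 2$. If anything, your bookkeeping with $m(N)$ and the monotonicity of $b_n$ is spelled out more carefully than in the paper's own terse argument, but it is not a different proof.
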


\begin{proof}

Let $X$ be an infinite strongly $f$-piercing sequence with $f(n')=\gamma{}n'+\oh{n'}$ and $\gamma < \frac{1}{\ln 2}$.
As $H_{2N}-H_{N-1} \to \ln 2$ when $N \to \infty$, we can choose $N_0$ large enough so that
$$f(n') \leq \frac{n'}{H_{2f(N_0)}-H_{f(N_0)-1}+\frac{1}{N_0}}$$
for every $n' \geq N_0$.
For every $f(N_0) \leq n \leq 2f(N_0)$ we have that $Z_n=\seq{X_1,X_2,\dots,X_n}$ contains as prefix a strongly $f$-piercing sequence of order
$$\floor{n\left(H_{2f(N_0)}-H_{f(N_0)-1}+\frac{1}{N_0}\right)}>n(H_{2f(N_0)}-H_{f(N_0)-1}).$$
Lemma~\ref{lem:gamma2n} states that for some $f(N_0) \leq n \leq 2f(N_0)$ we have
that the maximum distance between consecutive elements of $\sort\seq{0,1,X_1,X_2,\ldots,X_n}$ equals at least
$$\frac{1}{(H_{2f(N_0)}-H_{f(N_0)-1})n},$$
which gives a contradiction.

\end{proof}

\section{Piercing sequences and Farey fractions}
\label{sec:good}

Our aim in this section is to prove that, given an $f$-piercing sequence with $f(n)=\gamma n+o(n)$, one may construct a strongly $g$-piercing sequence with $g(n)=\gamma'n+o(n)$, where the constant $\gamma'$ is arbitrarily close to $\gamma$.
By Theorem~\ref{Theorem dBE}, this proves \autoref{thm:good}.

Our main tool are the well known sequences of Farey fractions.
Recall that the sequence of \emph{Farey fractions} of \emph{order} $p$ consists of all irreducible fractions in the unit interval $[0,1]$ whose denominators do not exceed $p$ (see~\cite{NivenZ72}).
We use the term \emph{Farey points} for the points on the real line corresponding to Farey fractions.

We need the following notation and terminology.
For any integers $0<n\leq m$, let $$\FP_n^m = \set{\frac{a}{p} : n \leq p \leq m, 0 \leq a \leq p}$$ denote the set of Farey points that can be expressed by a fraction with denominator between $n$ and $m$.
We say that two points $\frac{a}{p}, \frac{b}{q} \in \FP_n^m$ are \emph{consecutive} in $\FP_n^m$,
$\frac{a}{p}$ is \emph{previous} for $\frac{b}{q}$ in $\FP_n^m$, and $\frac{b}{q}$ is \emph{next} for $\frac{a}{p}$ in $\FP_n^m$,
when $\frac{a}{p} < \frac{b}{q}$ and there is no other $\frac{c}{r} \in \FP_n^m$ with $\frac{a}{p} < \frac{c}{r} < \frac{b}{q}$.
Similarly, let $$\FI_n^m = \set{\lbrac{\frac{a}{p},\frac{a+1}{p}} : n \leq p \leq m, 0 \leq a \leq p-1}$$ denote the set of Farey intervals defined by Farey points in $\FP_n^m$.
Lastly, let $$\AI^n = \set{\lbrac{y,y+\frac{1}{n}} : 0 \leq y \leq \frac{n-1}{n}}$$ denote the set of all intervals of length $\frac{1}{n}$ that are contained in $\lbrac{0,1}$.

We begin our investigation with a lemma, that captures the following intuition. Given any two reals $\alpha>\beta>1$ and $n$ big enough,
a large majority of intervals of length $\frac{1}{n}$ in $\lbrac{0,1}$
contain one of the Farey intervals with denominator between $\beta{}n$ and $\alpha{}n$.
There are only some exceptional regions in $[0,1)$, but not too many.

\begin{lemma}\label{lem:intervals}
Let $W \geqslant 2$ be a fixed integer, $\alpha = \frac{W+1}{W-1}$, and $\beta = \frac{W}{W-1}$.
For every $N > N_0 = N_0(W) = 2W^3$, and every interval $\lbrac{y,y+\frac{1}{N}} \in \AI^N$, we have that either
\begin{itemize}
\item $\lbrac{y,y+\frac{1}{N}}$ contains one of Farey intervals $\lbrac{\frac{c}{r},\frac{c+1}{r}} \in \FI_{\ceil{\beta{}N}}^{\floor{\alpha{}N}}$,
or
\item $\norm{y-\frac{b}{q}} < \frac{W+1}{N}$ for some Farey point $\frac{b}{q} \in \FP_1^{W-1}$.
\end{itemize}
\end{lemma}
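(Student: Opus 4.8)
The plan is to convert the first alternative into an arithmetic condition and then to separate the ``generic'' positions of $y$ from those sitting near a low‑denominator Farey point. For the reformulation, note that $\lbrac{y,y+\frac1N}$ contains a Farey interval $\lbrac{\frac cr,\frac{c+1}r}$ exactly when it contains two consecutive multiples of $\frac1r$; since $\frac1r<\frac1N$ this is equivalent to $\ceil{ry}+1\le ry+\frac rN$, i.e. $\ceil{ry}-ry\le\frac rN-1$, i.e. to: the least multiple of $\frac1r$ not below $y$ lies in $\lbrac{y,\,y+\frac1N-\frac1r}$. So it suffices to prove that if $y$ is not within $\frac{W+1}N$ of any point of $\FP_1^{W-1}$, then some $r\in\sbrac{\ceil{\beta N},\floor{\alpha N}}$ has a multiple of $\frac1r$ inside $\lbrac{y,\,y+\frac1N-\frac1r}$.

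The next step is to look at the Farey points $\frac{a''}{p''}\le y<\frac ap$ consecutive in the Farey sequence of order $m:=\floor{\alpha N}$, so that $ap''-a''p=1$, $p+p''>m$, and $y$ lies in a gap of length $\frac1{pp''}$. If $\min(p,p'')\le W-1$, writing $q$ for that denominator and $\frac bq\in\FP_1^{W-1}$ for the corresponding endpoint of the gap, then $\max(p,p'')=(p+p'')-q>m-q>\alpha N-W$, so $\norm{y-\frac bq}\le\frac1{q\max(p,p'')}<\frac1{\alpha N-W}$, and a short computation gives $\frac1{\alpha N-W}<\frac{W+1}N$ for $N>N_0=2W^3$ (this is precisely what $\alpha=\frac{W+1}{W-1}$ is for) --- the second alternative. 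So one may assume $p,p''\ge W$ and try to produce a good $r$. If $p$ has a multiple $r=kp$ in $\sbrac{\ceil{\beta N},\floor{\alpha N}}$ (e.g. if $p$ itself lies there, or if $p\le\floor{\alpha N}-\ceil{\beta N}$), take the largest such, so $r>m-p$; then $\frac ap$ is a multiple of $\frac1r$ with $\frac ap\ge y$, and using $p''\ge W$ together with $p+p''>m$ one checks $\frac ap-y\le\frac1{pp''}\le\frac1N-\frac1r$, i.e. $\frac ap\in\lbrac{y,\,y+\frac1N-\frac1r}$. Here $\beta=\frac W{W-1}$ is what makes $\frac1N-\frac1{\ceil{\beta N}}\ge\frac1{WN}$, supplying the needed slack.

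The only remaining case is that $p$ lies in one of the at most $W-1$ windows $\brac{\frac m{k+1},\frac{\ceil{\beta N}}k}$, $k\ge1$, for which no multiple of $p$ meets $\sbrac{\ceil{\beta N},\floor{\alpha N}}$. Then $p=\Theta(N)$, and since $p<\ceil{\beta N}$ we also get $p''>m-p=\Theta(N)$, so $\frac1{pp''}=\Oh{N^{-2}}$ and $y$ is within $\Oh{N^{-2}}$ of $\frac ap$. Now the multiple of $\frac1r$ immediately above $\frac ap$ lies at distance $\frac{(-ar)\bmod p}{pr}$ from $\frac ap$, so the first alternative holds as soon as $(-ar)\bmod p$ is a little below $\frac p{W-1}$ for some $r$ in a fixed positive fraction of $\sbrac{\ceil{\beta N},\floor{\alpha N}}$. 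As $r$ sweeps that subinterval, $(-ar)\bmod p$ runs through an arithmetic progression modulo $p$ with $\asymp\frac NW$ terms and step coprime to $p$; by the three‑distance theorem this progression must meet the required residue window \emph{unless} $\frac ap$ --- hence $y$ --- is very close to a rational $\frac st$ with $t=\Oh W$, in which case $\norm{y-\frac st}=\Oh{W/N}$. Pushing these estimates through with the calibrated constants then forces $t\le W-1$ and $\norm{y-\frac st}<\frac{W+1}N$, i.e. the second alternative; the residual boundary case, $y$ close to a rational of denominator exactly $W$, is settled separately, e.g. by exhibiting a Farey interval whose denominator is a suitable multiple of $W$ inside $\lbrac{y,y+\frac1N}$.

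I expect this last step to be the crux: making the three‑distance bound sharp enough to land the denominator at $\le W-1$ and the distance below $\frac{W+1}N$, and to eliminate the denominator‑$W$ boundary. The precise values $\alpha=\frac{W+1}{W-1}$, $\beta=\frac W{W-1}$ and $N_0=2W^3$ are exactly the slack needed to make that go through. The small cases $W=2,3$ are immediate: when $W=2$ any $r\ge 2N$ already forces two multiples of $\frac1r$ into every interval of length $\frac1N$, and $W=3$ is analogous.
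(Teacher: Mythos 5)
Your reduction to multiples of $\frac1r$ and the first two cases are sound: bracketing $y$ by consecutive fractions $\frac{a''}{p''}\le y<\frac ap$ of the Farey sequence of order $m=\floor{\alpha N}$, the case $\min(p,p'')\le W-1$ does yield the second alternative, and when some multiple $r$ of $p$ lies in $\sbrac{\ceil{\beta N},\floor{\alpha N}}$ the bound $pp''\ge W(m+1-W)\ge WN$ (true for $N>2W^3$) together with $\frac1N-\frac1r\ge\frac1{WN}$ indeed places the interval $\lbrac{\frac ap,\frac ap+\frac1r}$ inside $\lbrac{y,y+\frac1N}$. The genuine gap is the remaining case, which you yourself flag as the crux. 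There you reduce the problem to showing that, as $r$ ranges over (a positive fraction of) $\sbrac{\ceil{\beta N},\floor{\alpha N}}$, the residue $(-ar)\bmod p$ hits a window of length roughly $\frac p{W-1}$, and you assert that failure forces $y$ to lie within $\frac{W+1}N$ of a fraction with denominator at most $W-1$. That assertion is precisely the content of the lemma at this point, and it is not proved: the three-distance theorem, applied with the continued-fraction convergents of $\frac ap$, naturally produces a convergent $\frac st$ with $t=\Oh W$ (the largest gap is controlled by quantities of the form $\norm{q_{k-1}\theta}+\norm{q_k\theta}$, so the denominator bound one gets without further work is of the order $2W$, not $W-1$) and $\norm{\frac ap-\frac st}=\Oh{W/N}$; turning this into $t\le W-1$ and distance strictly below $\frac{W+1}N$ is exactly the calibration you leave open.

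Worse, the suggested patch for the boundary denominators $W\le t=\Oh W$ does not work as stated. If $y$ lies just \emph{above} $\frac st$, then for any $r$ in the admissible range that is a multiple of $t$, the least multiple of $\frac1r$ not below $y$ is about $\frac st+\frac1r$, at distance roughly $\frac1r\approx\frac{W-1}{WN}$ from $y$, while the available room is only $\frac1N-\frac1r\approx\frac1{WN}$; so no Farey interval with denominator a multiple of $t$ fits inside $\lbrac{y,y+\frac1N}$, and such $y$ are also not within $\frac{W+1}N$ of any point of $\FP_1^{W-1}$ in general. Hence the points near fractions with denominator between $W$ and $\Oh W$ remain uncovered by your argument, and closing this is not a routine constant-chase: it is where the specific choices $\alpha=\frac{W+1}{W-1}$, $\beta=\frac W{W-1}$, $N_0=2W^3$ have to earn their keep. (For comparison, the paper sidesteps rotations entirely: it covers each Farey point $\frac ap$ with $\ceil{\beta N}\le p\le\floor{\alpha N}$ by a nearby fraction $\frac{a\pm b}{p\pm q}$ with $\frac bq\in\FP_1^{W-1}$, shows the coverage regions of consecutive small-denominator fractions overlap, and treats denominators near $\beta N$ and $\alpha N$ by explicit chains $\frac{c+bi}{r+qi}$, so that the only exceptional $y$ are those within $\frac{W+1}N$ of $\FP_1^{W-1}$.)
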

\begin{proof}
For any $y \in \lbrac{0,\frac{N-1}{N}}$, we say that a Farey point $\frac{c}{r}$ in $\FP_{\ceil{\beta{}N}}^{\floor{\alpha{}N}}$ is a \emph{valid cover} of $y$ when $y < \frac{c}{r} \leq y + \frac{1}{WN}$.
Observe that for any valid cover $\frac{c}{r}$ of $y$, we get
$$y < \frac{c}{r} < \frac{c+1}{r} \leq y + \frac{1}{WN} + \frac{1}{r} \leq y + \frac{1}{WN} + \frac{1}{\beta{}N} = y + \frac{1+(W-1)}{WN} = y + \frac{1}{N}\textrm{,}$$
and the interval $\lbrac{y,y+\frac{1}{N}}$ contains Farey interval $\lbrac{\frac{c}{r},\frac{c+1}{r}}$.
Further, if $\frac{c}{r}$ is a valid cover of $\frac{a}{p}$, then $\frac{c}{r}$ is a valid cover of all $y$ in $\lbrac{\frac{a}{p},\frac{c}{r}}$.
Thus, we are interested in finding all pairs of Farey points consecutive in $\FP_{\ceil{\beta{}N}}^{\floor{\alpha{}N}}$ that are at distance larger than $\frac{1}{WN}$.
We show that for every $\frac{a}{p} \in \FP_{\ceil{\beta{}N}}^{\floor{\alpha{}N}}$ either
\begin{itemize}
\item there is a valid cover $\frac{c}{r}$ of $\frac{a}{p}$, or
\item there is a Farey point $\frac{b}{q} \in \FP_1^{W-1}$ with $\norm{\frac{a}{p}-\frac{b}{q}} < \frac{W}{N}$.
\end{itemize}
Now, fix any $\frac{b}{q} \in \FP_1^{W-1}$ and observe that for an $\frac{a}{p} \in \FP_{\ceil{\beta{}N}}^{\floor{\alpha{}N}}$:
\begin{itemize}
\item If $\frac{a}{p} < \frac{b}{q}$ and $a \geq \frac{b(W-1)p-p-q}{q(W-1)}$, then for $\frac{c}{r}=\frac{a+b}{p+q}$ we have:
\begin{align}
b(W-1)p - p - q & \leq a(W-1)q
& \textrm{/$+a(W-1)p$}  \nonumber\\
(a+b)(W-1)p & \leq a(W-1)(p+q) + (p+q)
& \textrm{/$\div (W-1)p(p+q)$}  \nonumber\\
\frac{a+b}{p+q} & \leq \frac{a}{p} + \frac{1}{(W-1)p},~~~~~~~~~~~~~~~~~{\rm and~thus:} \nonumber\\
\frac{a}{p} < \frac{c}{r} = \frac{a+b}{p+q} & \leq \frac{a}{p} + \frac{1}{(W-1)p} \leq \frac{a}{p} + \frac{1}{(W-1)\beta{}N} = \frac{a}{p} + \frac{1}{WN}\textrm{.} \label{ValidCoverJust}
\end{align}
Hence, $\frac{c}{r}$ is a valid cover of $\frac{a}{p}$, if $p+q = r \leq \floor{\alpha{}N}$.
\item Similarly, if $\frac{a}{p} > \frac{b}{q}$ and $a \leq \frac{b(W-1)p+p-q}{q(W-1)}$, then for $\frac{c}{r}=\frac{a-b}{p-q}$ we have:
\begin{align*}
\frac{a}{p} < \frac{c}{r} = \frac{a-b}{p-q} & \leq \frac{a}{p} + \frac{1}{(W-1)p}\leq
\frac{a}{p} + \frac{1}{WN}\textrm{.}
\end{align*}
Thus, $\frac{c}{r}$ is a valid cover of $\frac{a}{p}$, if $p-q = r \geq \ceil{\beta{}N}$.
\end{itemize}
Inspired by these observations, we say that a Farey point $\frac{b}{q} \in \FP_1^{W-1}$ \emph{provides cover} of a Farey point $\frac{a}{p} \in \FP_{\ceil{\beta{}N}}^{\floor{\alpha{}N}}$, if either
\begin{itemize}
\item $\frac{a}{p} < \frac{b}{q}$, $p+q\leq\floor{\alpha{}N}$, and $a \geq \frac{b(W-1)p-p-q}{q(W-1)}$, or
\item $\frac{a}{p} > \frac{b}{q}$, $p-q\geq\ceil{\beta{}N}$, and $a \leq \frac{b(W-1)p+p-q}{q(W-1)}$.
\end{itemize}
\begin{figure}
\centering
\begin{tikzpicture}[scale=0.12]
\clip (-9.450000000000001,56.55) rectangle (113.4, 101.4);
\draw[<->,ultra thick] (94.5,60) node[below]{$a$} --(0,60)--(0,94.5) node[left]{$p$};
\draw[-] (0,90) node[left]{$\alpha{}N$} --(90,90);
\draw[-] (0,75) node[left]{$\beta{}N$}--(90,75);
\draw[->,thick,red] (0,0)--(0.0,94.5) node[right]{$\frac{0}{1}$};
\filldraw[fill opacity=0.5,red,fill=red] (0,75)--(0,89)--(0.0,89)--(0.0,90)--(22.2,90)--(18.746666666666666,76)--(0.0,76)--(0.0,75)--cycle;
\draw[->,thick,black] (10.236666666666666,83.5)--(10.236666666666666,82.5);
\draw[->,thick,green] (0,0)--(23.625,94.5) node[right]{$\frac{1}{4}$};
\filldraw[fill opacity=0.5,green,fill=green] (13.8125,75)--(15.838333333333335,86)--(21.5,86)--(22.5,90)--(27.825,90)--(24.424166666666665,79)--(19.75,79)--(18.75,75)--cycle;
\draw[->,thick,black] (16.975208333333335,78.5)--(17.975208333333335,82.5);
\draw[->,thick,black] (24.124791666666663,86.5)--(23.124791666666663,82.5);
\draw[->,thick,blue] (0,0)--(31.5,94.5) node[right]{$\frac{1}{3}$};
\filldraw[fill opacity=0.5,blue,fill=blue] (18.5,75)--(21.46,87)--(29.0,87)--(30.0,90)--(37.2,90)--(32.24,78)--(26.0,78)--(25.0,75)--cycle;
\draw[->,thick,black] (22.99,79.5)--(23.99,82.5);
\draw[->,thick,black] (31.859999999999996,85.5)--(30.859999999999996,82.5);
\draw[->,thick,yellow] (0,0)--(47.25,94.5) node[right]{$\frac{1}{2}$};
\filldraw[fill opacity=0.5,yellow,fill=yellow] (27.875,75)--(32.70666666666666,88)--(44.0,88)--(45.0,90)--(55.95,90)--(47.86833333333334,77)--(38.5,77)--(37.5,75)--cycle;
\draw[->,thick,black] (35.02041666666666,80.5)--(36.02041666666666,82.5);
\draw[->,thick,black] (47.32958333333333,84.5)--(46.32958333333333,82.5);
\draw[->,thick,magenta] (0,0)--(63.0,94.5) node[right]{$\frac{2}{3}$};
\filldraw[fill opacity=0.5,magenta,fill=magenta] (43.5,75)--(50.459999999999994,87)--(58.0,87)--(60.0,90)--(67.2,90)--(58.24,78)--(52.0,78)--(50.0,75)--cycle;
\draw[->,thick,black] (49.489999999999995,79.5)--(51.489999999999995,82.5);
\draw[->,thick,black] (60.36,85.5)--(58.36,82.5);
\draw[->,thick,cyan] (0,0)--(70.875,94.5) node[right]{$\frac{3}{4}$};
\filldraw[fill opacity=0.5,cyan,fill=cyan] (51.3125,75)--(58.83833333333334,86)--(64.5,86)--(67.5,90)--(72.825,90)--(63.92416666666667,79)--(59.25,79)--(56.25,75)--cycle;
\draw[->,thick,black] (56.22520833333333,78.5)--(59.22520833333333,82.5);
\draw[->,thick,black] (67.37479166666667,86.5)--(64.37479166666667,82.5);
\draw[->,thick,red] (0,0)--(94.5,94.5) node[right]{$\frac{1}{1}$};
\filldraw[fill opacity=0.5,red,fill=red] (56.0,75)--(66.45333333333333,89)--(89.0,89)--(90.0,90)--(90,90)--(76,76)--(76.0,76)--(75.0,75)--cycle;
\draw[->,thick,black] (71.11333333333333,81.5)--(72.11333333333333,82.5);
\end{tikzpicture}
\caption{$W=5$, $N=60$, $\alpha{}N=90$, $\beta{}N=75$. Farey points in $\FP_1^4 = \set{\frac{0}{1},\frac{1}{4},\frac{1}{3},\frac{1}{2},\frac{2}{3},\frac{3}{4},\frac{1}{1}}$ provide cover of regions of $\FP_{75}^{90}$. Arrows represent the direction of $\pm \frac{b}{q}$. For every point $\frac{a}{p}$ in the blue region to the right of $\frac{1}{3}$ arrow, the point $\frac{a-1}{p-3}$ is a valid cover of $\frac{a}{p}$. }
\label{fig:moves}
\end{figure}
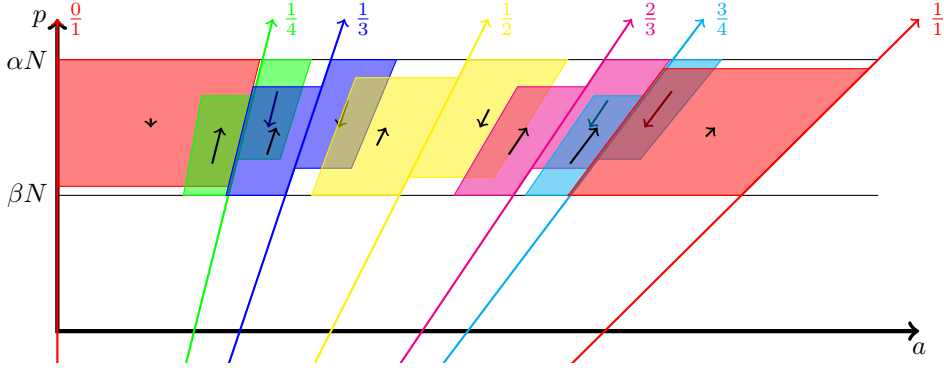

See \autoref{fig:moves} for an example, where for $W=5$, $N=60$ (which does not satisfy the condition $N > 2W^3$, but allows for clearer drawing), and each $\frac{b}{q} \in \FP_1^{W-1}$, there are depicted regions of those $\frac{a}{p}$ for which $\frac{b}{q}$ provides cover.
It is not a coincidence that in \autoref{fig:moves}, regions covered by any two consecutive Farey points overlap.

Consider any two consecutive Farey points $\frac{b_1}{q_1} < \frac{b_2}{q_2}$ in $\FP_1^{W-1}$.
As it is well known that $q_1+q_2 \geq W$ and $b_2q_1 - b_1q_2 = 1$ (see~\cite{NivenZ72}), for any integer $p \geq \beta{}N > 2W^3 > 2(W-1)q_1q_2$, we have:
\begin{align}
q_2 + q_1 & \geq (b_2q_1-b_1q_2)(W-1)+1
& \textrm{/$\cdot p$}  \nonumber\\
b_1(W-1)pq_2 + pq_2 & \geq b_2(W-1)pq_1 - pq_1 + p
& \textrm{/$-q_1q_2$}  \nonumber\\
b_1(W-1)pq_2 + pq_2 -q_1q_2 & > b_2(W-1)pq_1 - pq_1 -q_1q_2 + 2(W-1)q_1q_2
& \textrm{/$\div (W-1)q_1q_2$}  \nonumber\\
\frac{b_1(W-1)p + p - q_1}{(W-1)q_1} & > \frac{b_2(W-1)p - p - q_2}{(W-1)q_2} + 2\textrm{.} \label{GlueInequality}
\end{align}

Thus there is at least one integer $a$ such that $\frac{b_2(W-1)p-p-q_2}{q_2(W-1)} \leq a \leq \frac{b_1(W-1)p+p-q_1}{q_1(W-1)}$.
We say then that $\frac{a}{p}$ \emph{glues} $\frac{b_1}{q_1}$ with $\frac{b_2}{q_2}$.
Note its existence in particular implies that  for every $\frac{a}{p}\in \FP_{\ceil{\beta{}N}}^{\floor{\alpha{}N}} \cap (\frac{b_1}{q_1},\frac{b_2}{q_2})$ at least one of the inequalities: $a \geq \frac{b_2(W-1)p-p-q_2}{q_2(W-1)}$ or
$a \leq \frac{b_1(W-1)p+p-q_1}{q_1(W-1)}$ holds.

Thus if only $p \in \sbrac{\ceil{\beta{}N}+q_1,\floor{\alpha{}N}-q_2} \subseteq \sbrac{\ceil{\beta{}N}+W-1,\floor{\alpha{}N}-W+1}$, we are certain that

there is at least one $\frac{b}{q} \in \FP_1^{W-1}$ that provides cover of $\frac{a}{p}$, which implies that
$\frac{a}{p}$ has a valid cover.

The last remaining problem is to find valid covers of Farey points
$\frac{a}{p}\in\FP_{\ceil{\beta{}N}}^{\floor{\alpha{}N}}$
whose denominators are near $\beta{}N$ or $\alpha{}N$
and which are not too close to $\FP_1^{W-1}$ themselves, i.e.
$\frac{a}{p}\in [\frac{b'}{q'}+\frac{W}{N},\frac{b}{q}-\frac{W}{N}]$
for some consecutive $\frac{b'}{q'},\frac{b}{q}$ in $\FP_1^{W-1}$.

Suppose first that $p \geq \floor{\alpha{}N}-q+1$. Let $\frac{c}{r}\in\FP_{\ceil{\beta{}N}}^{\floor{\alpha{}N}}$ be such that $\ceil{\beta{}N} \leq r < \ceil{\beta{}N}+q$, $r \equiv p~(\modulo q)$, and $c = \floor{\frac{ar}{p}}$.
If $\frac{b'}{q'}$ provides cover of $\frac{a}{p}$, we are done. We may thus assume this is not the case, in particular $\frac{a}{p}$ does not glue $\frac{b'}{q'}$ with $\frac{b}{q}$, and hence by~(\ref{GlueInequality}),
\begin{align}
a &> \frac{b(W-1)p-p-q}{q(W-1)}+2& \nonumber\\
&= \left(\frac{b(W-1)p-p-\frac{qp}{r}}{q(W-1)}+\frac{p}{r}\right)
+\frac{\frac{p}{r}(2-W)+2W-3}{W-1}& \nonumber\\
& >    \frac{b(W-1)p-p-\frac{qp}{r}}{q(W-1)}+\frac{p}{r},\label{FirstAInequality}
\end{align}
where the last inequality follows by the fact that $\frac{p}{r}\leq \frac{\alpha{}}{\beta{}}=\frac{W+1}{W}< 2$.
Note that $\frac{c}{r} \leq \frac{a}{p} < \frac{c+1}{r}$. Thus by~(\ref{FirstAInequality}),
\begin{align*}
c & > \frac{ar}{p}-1
> \frac{r}{p}\left(\frac{b(W-1)p-p-\frac{qp}{r}}{q(W-1)}+\frac{p}{r}\right)-1 =\frac{b(W-1)r-r-q}{q(W-1)}, &
\end{align*}
and hence $\frac{b}{q}$ provides cover of $\frac{c}{r}$.

Now, let $t=\frac{p-r}{q}$, and consider the sequence $\frac{c_i}{r_i} = \frac{c+bi}{r+qi}$, for $i = 0,1,\ldots{},\frac{p-r}{q}=t$ and observe that for each $i\geq1$, $\frac{c_i}{r_i}>\frac{c_{i-1}}{r_{i-1}}$ and, since $\frac{b}{q}$ provides cover of $\frac{c}{r}$, then by~(\ref{ValidCoverJust}),
\begin{align*}
\frac{1}{WN} &\geq \frac{c+b}{r+q} - \frac{c}{r} =
\frac{rb-qc}{(r+q)r} \geq
\frac{rb-qc}{(r+qi)(r+q(i-1))} =
\frac{c_i}{r_i} - \frac{c_{i-1}}{r_{i-1}}, &
\end{align*}
and hence $\frac{c_i}{r_i}$ is a valid cover of $\frac{c_{i-1}}{r_{i-1}}$.
See \autoref{fig:border} for an example with $W=5, N=300, \frac{a}{b}=\frac{2}{3}, \frac{a}{p}=\frac{285}{448}$, and $\frac{c_i}{r_i} = \seq{\frac{238}{376},\frac{240}{379},\ldots{},\frac{286}{448}}$.
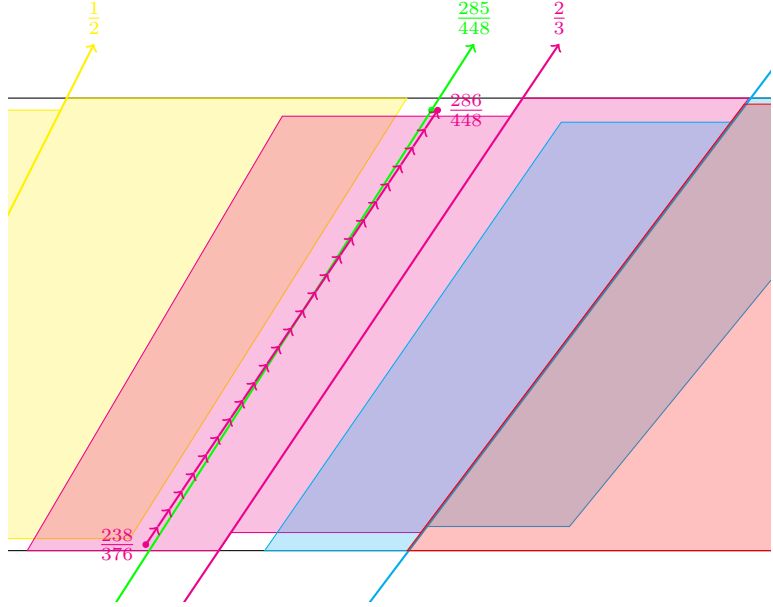
\begin{figure}
\centering
\begin{tikzpicture}[scale=0.08]
\clip (215.35714285714286,366.6) rectangle (340.7142857142857, 475.8);
\draw[<->,ultra thick] (321.42857142857144,300) node[below]{$a$} --(0,300)--(0,459.0) node[left]{$p$};
\draw[-] (0,450) node[left]{$\alpha{}N$} --(450,450);
\draw[-] (0,375) node[left]{$\beta{}N$}--(450,375);
\draw[->,thick,red] (0,0)--(0.0,459.0) node[above]{$\frac{0}{1}$};
\filldraw[fill opacity=0.25,red,fill=red] (0,375)--(0,449)--(0.0,449)--(0.0,450)--(112.19999999999999,450)--(93.74933333333333,376)--(0.0,376)--(0.0,375)--cycle;
\draw[->,thick,green] (0,0)--(114.75,459.0) node[above]{$\frac{1}{4}$};
\filldraw[fill opacity=0.25,green,fill=green] (70.0625,375)--(83.32766666666666,446)--(111.5,446)--(112.5,450)--(140.32500000000002,450)--(118.18483333333334,379)--(94.75,379)--(93.75,375)--cycle;
\draw[->,thick,blue] (0,0)--(153.0,459.0) node[above]{$\frac{1}{3}$};
\filldraw[fill opacity=0.25,blue,fill=blue] (93.5,375)--(111.452,447)--(149.0,447)--(150.0,450)--(187.2,450)--(157.248,378)--(126.0,378)--(125.0,375)--cycle;
\draw[->,thick,yellow] (0,0)--(229.5,459.0) node[above]{$\frac{1}{2}$};
\filldraw[fill opacity=0.25,yellow,fill=yellow] (140.375,375)--(167.70133333333334,448)--(224.0,448)--(225.0,450)--(280.95,450)--(235.37366666666665,377)--(188.5,377)--(187.5,375)--cycle;
\draw[->,thick,magenta] (0,0)--(306.0,459.0) node[above]{$\frac{2}{3}$};
\filldraw[fill opacity=0.25,magenta,fill=magenta] (218.5,375)--(260.452,447)--(298.0,447)--(300.0,450)--(337.2,450)--(283.248,378)--(252.0,378)--(250.0,375)--cycle;
\draw[->,thick,cyan] (0,0)--(344.25,459.0) node[above]{$\frac{3}{4}$};
\filldraw[fill opacity=0.25,cyan,fill=cyan] (257.5625,375)--(306.32766666666663,446)--(334.5,446)--(337.5,450)--(365.325,450)--(307.6848333333333,379)--(284.25,379)--(281.25,375)--cycle;
\draw[->,thick,red] (0,0)--(459.0,459.0) node[above]{$\frac{1}{1}$};
\filldraw[fill opacity=0.25,red,fill=red] (281.0,375)--(336.4506666666667,449)--(449.0,449)--(450.0,450)--(450,450)--(376,376)--(376.0,376)--(375.0,375)--cycle;
\node[draw,fill,circle,green,inner sep=0.75pt,outer sep=0] at (285,448) {};
\draw[->,thick,green] (0,0)--(291.99776785714283,459.0) node[above]{$\frac{285}{448}$};
\node[draw,fill,circle,inner sep=0.75pt,outer sep=0,magenta] at (286,448) {};
\node[right,magenta] at (286,448) {$\frac{286}{448}$};
\draw[<-,thick,magenta] (286,448)--(284,445);
\draw[<-,thick,magenta] (284,445)--(282,442);
\draw[<-,thick,magenta] (282,442)--(280,439);
\draw[<-,thick,magenta] (280,439)--(278,436);
\draw[<-,thick,magenta] (278,436)--(276,433);
\draw[<-,thick,magenta] (276,433)--(274,430);
\draw[<-,thick,magenta] (274,430)--(272,427);
\draw[<-,thick,magenta] (272,427)--(270,424);
\draw[<-,thick,magenta] (270,424)--(268,421);
\draw[<-,thick,magenta] (268,421)--(266,418);
\draw[<-,thick,magenta] (266,418)--(264,415);
\draw[<-,thick,magenta] (264,415)--(262,412);
\draw[<-,thick,magenta] (262,412)--(260,409);
\draw[<-,thick,magenta] (260,409)--(258,406);
\draw[<-,thick,magenta] (258,406)--(256,403);
\draw[<-,thick,magenta] (256,403)--(254,400);
\draw[<-,thick,magenta] (254,400)--(252,397);
\draw[<-,thick,magenta] (252,397)--(250,394);
\draw[<-,thick,magenta] (250,394)--(248,391);
\draw[<-,thick,magenta] (248,391)--(246,388);
\draw[<-,thick,magenta] (246,388)--(244,385);
\draw[<-,thick,magenta] (244,385)--(242,382);
\draw[<-,thick,magenta] (242,382)--(240,379);
\draw[<-,thick,magenta] (240,379)--(238,376);
\node[draw,fill,circle,inner sep=0.75pt,outer sep=0,magenta] at (238,376) {};
\node[left,magenta] at (238,376) {$\frac{238}{376}$};
\end{tikzpicture}
\caption{
$W=5$, $N=300$, $\alpha{}N=450$, $\beta{}N=375$. Farey point $\frac{2}{3}$ does not provide cover of $\frac{285}{448}$ (as $448+3>\alpha{}N$),
but line parallel to $\frac{2}{3}$ through points $\frac{238}{376}$, and $\frac{286}{448}$ crosses line $\frac{285}{448}$ somewhere above $\ceil{\beta{}N}$.
}
\label{fig:border}
\end{figure}
For $i=0$, we have $\frac{c_0}{r_0} = \frac{c}{r} \leq \frac{a}{p}$.
For $i=t=\frac{p-r}{q}$, we have $\frac{c_t}{r_t} = \frac{c+bt}{p}$.
Observe that for $N > 2W^3$, we have

\begin{align}
p-r > (\alpha{}N -q) - (\beta{}N+q) = \frac{1}{W-1}N-2q \geq \frac{1}{W-1}N-2(W-1) > \frac{N}{W}, \label{P-R_Calc}
\end{align}

and therefore:
\begin{align*}
\frac{a}{p}  & \leq \frac{b}{q} - \frac{W}{N} \\
\frac{a}{p}  & < \frac{b}{q} - \frac{1}{p-r} \\
aq(p-r) & < bp(p-r) - pq \\
aqp & < arq - pq + b(p-r)p \\
a & < \frac{ar-p}{p} + b\cdot\frac{p-r}{q} < c + bt = c_t \\
\frac{a}{p} & < \frac{c_t}{p} = \frac{c_t}{r_t}\textrm{.}
\end{align*}

So, $\frac{c_0}{r_0} \leq \frac{a}{p} < \frac{c_t}{r_t}$, and hence there is some $i \in \set{1,2,\ldots,t}$, such that $\frac{c_{i-1}}{r_{i-1}} \leq \frac{a}{p} < \frac{c_{i}}{r_{i}}$.
As $\frac{c_{i}}{r_{i}}$ is a valid cover of $\frac{c_{i-1}}{r_{i-1}}$, it is also a valid cover of $\frac{a}{p}$.

It remains to discuss the situation when $p$ is close to $\beta{}N$.
By~(\ref{GlueInequality}) and the case discussed above we may thus assume that
$p \leq \ceil{\beta{}N}+q'-1$.
This time we choose $\frac{c}{r}\in\FP_{\ceil{\beta{}N}}^{\floor{\alpha{}N}}$ with
$\floor{\alpha{}N}-q' < r \leq \floor{\alpha{}N}$, $r \equiv p~(\modulo q')$, and $c = a+1+b'\cdot\frac{r-p}{q'}$.
Similarly as in~(\ref{P-R_Calc}), $r-p>\frac{N}{W}$.
Since additionally $\frac{a}{p}-\frac{b'}{q'}\geq\frac{W}{N}$, then we have:
\begin{align}
1 & < \left(\frac{a}{p}-\frac{b'}{q'}\right)(r-p) & \nonumber\\
1+\frac{b'}{q'}(r-p) & < \frac{a}{p}(r-p) &\nonumber\\
p+b'p\frac{r-p}{q'} & < a(r-p) &\nonumber\\
pa+p+b'p\frac{r-p}{q'} & < ar & \textrm{/$\div (pr)$}\nonumber\\
\frac{c}{r}=\frac{a+1+b'\frac{r-p}{q'}}{r} & < \frac{a}{p}& \label{CRSmall2}
\end{align}
and moreover, as $\frac{a}{p}>\frac{b'}{q'}=\frac{b'\frac{r-p}{q'}}{q'\frac{r-p}{q'}}$, then
\begin{align*}
\frac{b'}{q'}<&\frac{a+b'\frac{r-p}{q'}}{p+q'\frac{r-p}{q'}} < \frac{a+1+b'\cdot\frac{r-p}{q'}}{r} = \frac{c}{r}.&
\end{align*}
Now, consider the sequence $\frac{c_i}{r_i} = \frac{c-b'i}{r-q'i}$, for $i = 0,1,\ldots{},t=\frac{r-p}{q'}$, and note that
$\frac{c_t}{r_t} = \frac{a+1}{p} > \frac{a}{p}$, while
by~(\ref{CRSmall2}), $\frac{c_0}{r_0} =\frac{c}{r} < \frac{a}{p}$.
Thus
for some $i \in \set{1,2,\ldots,t}$ we have $\frac{c_{i-1}}{r_{i-1}} \leq \frac{a}{p} < \frac{c_i}{r_i}$.
Analogously as in the previous case, by~(\ref{GlueInequality}), we may assume that $a < \frac{b'(W-1)p+p-q'}{q'(W-1)}-2 < \frac{b'(W-1)p+p-q'}{q'(W-1)}-1$, and thus, as $r_{i-1}-q'\geq p \geq \ceil{\beta{}N}$,
\begin{align*}
\frac{c_{i-1}}{r_{i-1}} < & \frac{a+1}{p} < \frac{b'(W-1)+1-\frac{q'}{p}}{q'(W-1)} <
\frac{b'(W-1)+1-\frac{q'}{r_{i-1}}}{q'(W-1)},
\end{align*}
and thus $\frac{b'}{q'}$ provides cover of $\frac{c_{i-1}}{r_{i-1}}$, which means that $\frac{1}{NW}\geq \frac{c_{i-1}-b'}{r_{i-1}-q'}-\frac{c_{i-1}}{r_{i-1}} = \frac{c_{i}}{r_{i}}-\frac{c_{i-1}}{r_{i-1}}$, and thus $\frac{c_{i}}{r_{i}}$ is a valid cover of $\frac{a}{p}$.

We have found a valid cover of every Farey point in $\FP_{\ceil{\beta{}N}}^{\floor{\alpha{}N}}$ except those that are at distance smaller than $\frac{W}{N}$ from some Farey point in $\FP_1^{W-1}$.
Since for every $\frac{b}{q}\in\FP_1^{W-1}$ there must be some Farey point in $\FP_{\ceil{\beta{}N}}^{\floor{\alpha{}N}}$ at distance at most $\frac{1}{\floor{\alpha{}N}}<\frac{1}{N}$ (in any direction) from $(\frac{b}{q}-\frac{W}{N},\frac{b}{q}+\frac{W}{N})$,
this yields a valid cover of every real point $y\in[0,1)$ except possibly those that are at distance smaller than $\frac{W+1}{N}$ from some Farey point in $\FP_1^{W-1}$ and ends the proof.
\end{proof}

Observe that there are only $\Oh{W^2}$ Farey points in $\FP_1^{W-1}$, and the length of the union of all intervals in $\AI^n$ that do not include any interval from $\FI_{\ceil{\beta{}N}}^{\floor{\alpha{}N}}$ is $\frac{\Oh{W^3}}{N}$.
This length tends to zero as $N$ tends to infinity. The next lemma exploits this fact to construct a strongly piercing sequence from a piercing one having similar parameters.

\begin{lemma}\label{lem:goodgreat}
Let $W \geqslant 2$ be a fixed integer, and let $\alpha = \frac{W+1}{W-1}$.
For every function $f(n) = \gamma{}n + \oh{n}$, there
exist $N_0$ and a function $g(n) = \alpha^2\gamma{}n + \oh{n}$ such that for every $N\geq N_0$
if there is an $f$-piercing sequence $X$ of order $\floor{\alpha{}N}$, then there is a strongly $g$-piercing sequence $Z$ of order $N$.
\end{lemma}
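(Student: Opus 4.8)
The plan is to build $Z$ from three ingredients: the given $f$-piercing sequence $X$ of order $\floor{\alpha N}$ as a backbone; a fixed initial grid $G=\set{0,\frac{1}{2W^3},\dots,\frac{2W^3-1}{2W^3}}$ of $2W^3$ equally spaced points; and, for each of the $\Oh{W^2}$ Farey points $v\in\FP_1^{W-1}$, a sparse ``zoom-in'' sequence $B_v$ of points clustering geometrically around $v$. The whole argument rests on \autoref{lem:intervals}, which, at any scale $n>2W^3$, splits strong $g$-piercing into two tasks: meeting every interval of $\AI^n$ that contains a Farey interval from $\FI_{\ceil{\beta n}}^{\floor{\alpha n}}$, and meeting the ``exceptional'' intervals of $\AI^n$, those lying within $\frac{W+1}{n}$ of some point of $\FP_1^{W-1}$. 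Scales $n\le 2W^3$ are outside the range of \autoref{lem:intervals} and will be handled directly by $G$, which is $\frac{1}{2W^3}$-dense in $\lbrac{0,1}$ and hence meets every interval of length $\frac1n\ge\frac1{2W^3}$ contained in $\lbrac{0,1}$.

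For the first task (at a scale $n>2W^3$) I would show that the prefix $\seq{x_1,\dots,x_{f(\floor{\alpha n})}}$ of $X$ already suffices: a Farey interval $\lbrac{\frac cr,\frac{c+1}r}$ with $\ceil{\beta n}\le r\le\floor{\alpha n}\le\floor{\alpha N}$ is one of the intervals $\lbrac{\frac ir,\frac{i+1}r}$ that appear in the definition of the order-$\floor{\alpha N}$ $f$-piercing property of $X$, so it is met by $\set{x_1,\dots,x_{f(r)}}$, and $f(r)\le f(\floor{\alpha n})$ as $r\le\floor{\alpha n}$ and $f$ is nondecreasing. For the exceptional task, the exceptional intervals near a fixed $v$ all lie in the window $R_n(v)=\brac{v-\frac{W+1}{n},v+\frac{W+2}{n}}$ of length $\frac{2W+3}{n}$, so it is enough to have, among the first $g(n)$ terms of $Z$, a set that is $\frac1n$-dense in $R_n(v)$. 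Here I would exploit self-similarity: let $B_v$ list, level by level for $k=0,1,2,\dots$, the $\Oh W$ points $v+j2^{-k}$ (with $j\in\Z$) that fall in $\brac{v-(2W+3)2^{1-k},v+(2W+3)2^{1-k}}\cap\lbrac{0,1}$. Then the single level $k=\ceil{\log_2 n}$ is already $\frac1n$-dense in an interval comfortably containing $R_n(v)$ — its spacing $2^{-k}$ is at most $\frac1n$ and its reach $(2W+3)2^{1-k}$ is at least $\frac{2W+3}{n}$ — so at scale $n$ only the first $\Oh{W\log n}$ points of each $B_v$ are ever needed. Summed over $\FP_1^{W-1}$, this is an overhead of only $\Oh{W^3\log n}=\oh n$.

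It remains to merge $X$, $G$, and the sequences $B_v$ into one sequence so that all the required prefixes fit inside the first $g(n)$ terms for every $n$, with $g(n)=\alpha^2\gamma n+\oh n$. I would put $G$ first and then proceed in rounds: in round $t$ append a block of $L$ fresh points of $X$, followed by the $t$-th point of each $B_v$, where $L=L(W)$ is a constant to be fixed. After $t=\ceil{f(\floor{\alpha n})/L}$ rounds the prefix contains $x_1,\dots,x_{f(\floor{\alpha n})}$ (and, since then $t=\Theta(n)\gg W\log n$, also the $\Oh{W\log n}$ needed points of every $B_v$), and the number of terms placed is at most $2W^3+t\brac{L+|\FP_1^{W-1}|}\le f(\floor{\alpha n})\brac{1+\frac{|\FP_1^{W-1}|}{L}}+\Oh1=\gamma\alpha\brac{1+\frac{|\FP_1^{W-1}|}{L}}n+\oh n$. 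Choosing $L$ large enough that $1+\frac{|\FP_1^{W-1}|}{L}<\alpha$ — possible since $\alpha-1=\frac{2}{W-1}>0$ — bounds this by $\alpha^2\gamma n$ minus a positive linear term. One then takes $g(n):=\ceil{\alpha^2\gamma n}$ for $n\ge N_0$ (and constant for $n<N_0$); for $N_0=N_0(W)$ large enough this is a nondecreasing integer function that is at least $n$, at least $2W^3$, at least the number of initial terms of $Z$ needed for the piercing conditions at every scale $n\le N$, and equal to $\alpha^2\gamma n+\oh n$, so the order-$N$ prefixes of $Z$ witness that $Z$ is strongly $g$-piercing of order $N$. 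Boundary Farey points $v=0,1$ and auxiliary points falling outside $\lbrac{0,1}$ are handled simply by discarding the latter.

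The main obstacle is precisely the exceptional task: a single sequence $Z$ must be strongly piercing at \emph{all} scales $n\le N$ simultaneously, while the exceptional regions of \autoref{lem:intervals} are nested, geometrically shrinking neighbourhoods of the \emph{same} finite set $\FP_1^{W-1}$. Covering them naively, scale by scale, would cost $\Theta(W^3 n)$ extra points, which swamps the available slack once $W$ is large; the point of the dyadically refined clusters $B_v$ is that self-similarity lets one resolution level do all the work at each scale, reducing this overhead to $\Oh{W^3\log n}=\oh n$. The merging constant $L$ then absorbs the only remaining linear overhead into the harmless extra factor $\alpha$, producing $g(n)=\alpha^2\gamma n+\oh n$.
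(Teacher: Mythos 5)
Your proposal is correct and follows essentially the same route as the paper's proof: it invokes Lemma~\ref{lem:intervals} to split each scale into intervals containing a Farey interval (pierced by the prefix $x_1,\ldots,x_{f(\floor{\alpha n})}$ of $X$) and exceptional intervals near $\FP_1^{W-1}$ (pierced by dyadic clusters of auxiliary points, the paper's $H_r^W$), with a coarse initial grid for small scales and an interleaving argument giving $g(n)=\alpha^2\gamma n+\oh{n}$. The only differences are organizational — per-Farey-point clusters $B_v$ merged round-robin with a constant block size $L$ instead of the paper's per-level blocks $H_r^W$ interleaved along dyadic prefixes of $X$ — and both bookkeeping schemes yield the same bound.
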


\begin{proof}
Let $\beta = \frac{W}{W-1}$, and choose $N_0 > 2W^3$ so that $f(N) < \alpha\gamma N$ for all $N\geq N_0$.
We define $g(n) = \ceil{\alpha^2\gamma{}n + N_0 + 5W^3(\log_2n+ 1)}$.
Assume that $X=(x_1,x_2,\ldots{},x_{f(\floor{\alpha{}N})})$ is an $f$-piercing sequence of order $\floor{\alpha{}N}$ for some $N\geq N_0$.

By Lemma~\ref{lem:intervals}, for any $N_0 \leq n \leq N$ and any interval $\lbrac{y,y+\frac{1}{n}} \in \AI^n$, we have that
either $\norm{y-\frac{b}{q}} < \frac{W+1}{n}$ for some Farey point $\frac{b}{q}$ in $\FP_1^{W-1}$,
or $\lbrac{y,y+\frac{1}{n}}$ contains one of the intervals from $\FI_{\ceil{\beta{}n}}^{\floor{\alpha{}n}}$.
As the first $f(\floor{\alpha{}n})$ elements of $X$ is an $f$-piercing sequence of order $\floor{\alpha{}n}$, we have that for each interval $I$ in $\FI_{1}^{\floor{\alpha{}n}}$,
there is at least one $x_i \in I$ for some $i=1,2,\ldots{},f(\floor{\alpha{}n})$.

Our goal is to construct a strongly $g$-piercing sequence $Z$ of order $N$.
The sequence $Z$ will include $X$ as a subsequence, and already thereby will have a point in large majority of intervals of length $\frac{1}{n}$ from $[0,1)$. Intuitively, it remains to appropriately fill in the constructed $Z$ with relatively small number of points which will handle intervals $\lbrac{y,y+\frac{1}{n}}$ with $n < N_0$ and those with $n\geq N_0$ and $y$ close to some Farey point in $\FP_1^{W-1}$.

Now, for any nonnegative integer $r$ we define the set
$$H_r^W = \set{ \frac{b}{q} + \frac{a}{2^{r}} : \frac{b}{q} \in \FP_1^{W-1}, -2(W+1) \leq a \leq 2(W+1)} \cap \lbrac{0,1}\textrm{.}$$
As there are less than $(W-1)^2$ points in $\FP_1^{W-1}$, there are less than $5W^3$ elements in any $H_r^W$.
Observe, that for every $2^{r-1} \leq n \leq 2^{r}$, every $\frac{b}{q} \in \FP_1^{W-1}$, and every $y$ with $\norm{y-\frac{b}{q}} < \frac{W+1}{n}$, the interval $\lbrac{y,y+\frac{1}{n}}$ contains at least one of the points in $H_r^W$.
In what follows, we treat $H_r^W$ as a sequence of points arranged in any order.

We are ready to construct a sequence $Z$ that combines sequences $H_r^W$ with $X$.
We construct $Z$ incrementally by adding some sequences to the end. For two finite sequences $P$ and $Q$, we write $P \odot Q$ to denote their concatenation.

First, we explicitly add points piercing
all intervals in $\AI^1,\AI^2,\ldots{},\AI^{N_0}$, and elements $x_1,\ldots,x_{f(1)}$.
Let $$Z_0 = \seq{ \frac{1}{N_0}, \frac{2}{N_0}, \ldots,\frac{N_0-1}{N_0}} \odot \seq{x_1,\ldots,x_{f(1)}}\textrm{.}$$
Next, let $l=\floor{\log_2 \floor{\alpha{}N}}$, and for each $r=1,\ldots,l$, we extend $Z$ with points from $H_r^W$ and part of $X$.
Let $$Z_r = Z_{r-1} \odot H_r^W \odot \seq{ x_{f(2^{r-1})+1}, \ldots, x_{f(2^r)} }\textrm{.}$$
Finally, let $$Z = Z_{l} \odot H_{l+1}^W \odot \seq{ x_{f(2^{l})+1}, \ldots, x_{f(\floor{\alpha{}N})}}\textrm{.}$$

Now we show that $Z$ is strongly $g$-piercing of order $N$.
For every $1 \leq n \leq N_0$ we have $g(n) \geq N_0$, and every interval of length $\frac{1}{n}$ within $[0,1)$ contains one of the points from $Z_0$.
For every $N_0 < n \leq N$, the first $g(n)$ elements of $Z$ contain:
\begin{itemize}
\item $\set{\frac{i}{N_0}:1 \leq i < N_0}$ (first $N_0$ elements),
\item $H_1^W\cup H_2^W\cup\ldots\cup H_{\ceil{\log n}}^W$ (fewer than $5W^3\ceil{\log_2 n}$ elements),
\item $\seq{x_1,x_2,\ldots,x_{f(\floor{\alpha{}n})}}$ (as $f(\floor{\alpha{}n}) < \alpha^2 \gamma n$),
\end{itemize}
and hence, for each interval $I = \lbrac{y,y+\frac{1}{n}}$ within $[0,1)$, if $\norm{y-\frac{b}{q}} < \frac{W+1}{n}$ for some $\frac{b}{q} \in \FP_1^{W-1}$, then $I$ contains one of the points in $H_{\ceil{\log n}}^W$.
Otherwise, $I$ contains one of the Farey intervals in $\FI_{\ceil{\beta{}n}}^{\floor{\alpha{}n}}$, which in turn includes one of the elements in $\seq{x_1,x_2,\ldots,x_{f(\floor{\alpha{}n})}}$.

This ends the proof.
\end{proof}

\section{Proofs of the main results}

We are ready to prove our two main theorems stated in the Introduction.
\thmgood*
\begin{proof}
The \emph{if} part follows immediately from Corollary~\ref{Corollary dBE}.

For the \emph{only if} part, suppose that there exists some infinite $f$-piercing sequence $X$ with $f(n)=\gamma{}n+\oh{n}$ and $\gamma<\frac{1}{\ln 2}$.
Choose $W \geqslant 2$ large enough, so that for $\alpha = \frac{W+1}{W-1}$ we have $\alpha^2\gamma < \frac{1}{\ln 2}$.
Let $N_0$ and $g(n) = \alpha^2\gamma n + \oh{n}$ be the constant and the functions guaranteed by Lemma~\ref{lem:goodgreat}.

For every $N>N_0$ we can apply Lemma~\ref{lem:goodgreat} to the prefix of the first $\floor{\alpha{}N}$ elements of $X$ to obtain a sequence $Z_N$.
It is obvious from the construction used in Lemma~\ref{lem:goodgreat} that the sequence of sequences $Z_N$ for all $N>N_0$ converges to an infinite sequence $Z$, and that the sequence $Z$ is strongly $g$-piercing.
This contradicts Theorem~\ref{Theorem dBE}.
\end{proof}

\thms*

\begin{proof}
For the lower bound, we may use the strongly $(\ceil{\frac{n}{\ln 2}})$-piercing sequence $X$ given by de~Bruijn and Erd\H{o}s, defined by $x_i = \log_2(2i+1)~(\modulo 1)$, for $i=1,2,\ldots,\ceil{\frac{N}{\ln 2}}$.
For $N\leq \floor{\frac{\ln 2}{1-\ln 2}d}$, we have that
$N+d \geqslant \ceil{\frac{N}{\ln 2}}$ and $X$ is strongly $(n+d)$-piercing of order $N$.

For the upper bound, assume to the contrary that $s(d)$ is not $\frac{1+\ln2}{1-\ln2}d+\oh{d}$.
Then there exists a constant $c>\frac{1+\ln2}{1-\ln2}$ such that $s(d) > cd$ for infinitely many values of $d$.
Choose an integer $W>10$ large enough so that for $\alpha{}=\frac{W+1}{W-1}$ we have $\frac{1+\alpha^2\ln2}{1-\alpha^2\ln 2}<c$.

Then, choose $d$ large enough so that

$$s(d) > \ceil{\frac{1+\alpha^2\ln2}{1-\alpha^2\ln 2} \cdot d + 500W^3},$$
and for $N_0=\floor{\frac{s(d)}{\alpha}}$ we additionaly have
$$\frac{1}{H_{2\floor{\frac{N_0}{2}}}-H_{\floor{\frac{N_0}{2}}-1}} > \frac{1}{\alpha{}\ln 2}.$$
Observe that
$$N_0=\floor{\frac{s(d)}{\alpha{}}}>\frac{1+\alpha^2\ln2}{\alpha{}(1-\alpha^2\ln2)}d+400W^3,$$ and let $X=(x_1,x_2,\ldots,x_{\floor{\alpha{}N_0}+d})$ be an $(n+d)$-piercing sequence of order $\floor{\alpha{}N_0}$.

Let $$Z = H_{\ceil{\log_2 \frac{N_0}{8}}}^W \odot H_{\ceil{\log_2 \frac{N_0}{4}}}^W \odot H_{\ceil{\log_2 \frac{N_0}{2}}}^W \odot H_{\ceil{\log_2 N_0}}^W \odot X$$ be a sequence of length $2L$ (as it will be clear that we can remove its last element if necessary).
We have $2L>\alpha{}N_0$ and $L>\frac{N_0}{2}$.
Observe that for every $L \leq l \leq 2L$, we have that the first $l$ elements of $Z$ includes sequences $H_r^W$, and at least first $l-20W^3-1$ elements of $X$.
Observe that for $M=l-20W^3-1-d$, as $X$ is $(n+d)$-piercing, we have that the first $l-20W^3-1$ elements of $X$
include a point in every interval $[\frac{i}{m},\frac{i+1}{m}]$ for every $m=1,2,\ldots,M$, and $i=0,1,\ldots,m-1$.
Further, $\frac{N_0}{8} < \frac{M}{\alpha}  \leq {N_0}$, and thus Lemma~\ref{lem:intervals} implies that the maximum distance between consecutive first $l$ elements of $Z$ is at most $\frac{\alpha}{M}$.

On the other hand, applying Lemma~\ref{lem:gamma2n} to $Z$ we get that for some $L \leq l \leq 2L$ we have that the maximum distance between consecutive elements of $\set{0,1,Z_1,Z_2,\ldots,Z_l}$ equals at least
$$\frac{1}{(H_{2L}-H_{L-1})l} > \frac{1}{(H_{2\floor{\frac{N_0}{2}}}-H_{\floor{\frac{N_0}{2}}-1})l} > \frac{1}{\alpha{}l\ln2}.$$
It remains to prove that $\frac{\alpha}{M} \leq \frac{1}{\alpha{}l\ln2}$ to get a contradiction.

Starting with the assumption on the order of $s(d)$ we get the following:
\begin{align*}
s(d) & \geq \frac{d(1 + \alpha^2\ln2)+2+40W^3}{1-\alpha^2\ln2} & \textrm{/$+d$} \\
2L \geq s(d) + d & \geq \frac{2d+2+40W^3}{1-\alpha^2\ln2} & \textrm{/$\div 2$} \\
l \geq L & \geq \frac{d+1+20W^3}{1-\alpha^2\ln2}\\
l(1-\alpha^2\ln2) & \geq d+1+20W^3\\
M=l-20W^3-1-d &\geq \alpha^2l\ln2 & \textrm{/$\div\alpha$}\\
\frac{M}{\alpha} & \geq \alpha{}l\ln2.\\
\end{align*}
\end{proof}

\section{Final remarks}

Let us conclude the paper with some discussion on possible future directions of studies in this topic.

Perhaps the most natural and challenging one concerns the determination of the concrete values of the function $s(d)$. We know only that $s(0)=17$, by the result of Warmus~\cite{Warmus76}, and that $s(1)\geq 31$, by an explicit example found by Oliveira e Silva~\cite{Oliveira17}, who also claims that $s(1)=31$ was verified by computer search.
Some computational experiments were also made by Levy in his PhD thesis~\cite{LevyPhD}, but no conjecture is stated there. Even now, though we know of quite close linear bounds from both sides for the function $s(d)$, a more exact formula for $s(d)$ seems rather elusive. However, it would be interesting to determine exactly the multiplicative constant in the presumed asymptotics of $s(d)$. We conjecture that our lower bound is the correct value.

\begin{conjecture}The limit $\lim_{d\to \infty}\frac{s(d)}{d} = \frac{\ln2}{1-\ln2}$.
\end{conjecture}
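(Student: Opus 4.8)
A natural line of attack on this conjecture runs as follows. Since the lower bound $s(d)\ge\floor{c_1 d}$ is already witnessed by the de~Bruijn--Erd\H{o}s sequence $x_i=\log_2(2i+1)\ (\modulo 1)$, the conjecture reduces to the matching upper bound $s(d)\le c_1 d+\oh{d}$. Write $m=s(d)$, so that an optimal $(n+d)$-piercing sequence $X$ has $m+d$ points. The first step is the Farey reduction of \autoref{lem:intervals} and \autoref{lem:goodgreat}: one builds from $X$ a sequence $Z$ with $\norm{Z}=m+d+\oh{m}$ whose $k$-th prefix is, up to $\oh{\cdot}$ terms and the $\Oh{W^3}$ patch points supplied by the sets $H_r^W$, strongly piercing of order $\tfrac{k-d}{\alpha}$, where $\alpha=\tfrac{W+1}{W-1}$. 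Equivalently, the maximum inter-point gap $b_k$ of $\seq{0,1,Z_1,\ldots,Z_k}$ satisfies $b_k<\tfrac{\alpha}{k-d}+\oh{1/k}$ throughout $k\in\sbrac{d+\Oh{W^3},\,m+d}$. The problem thus becomes a stick-breaking game with a ``$d$-step head start'': a process whose maximum-gap sequence obeys $b_k<\tfrac{\alpha}{k-d}$ on $\sbrac{d+1,d+m}$ should have $m\le c_1 d+\oh{d}$, with the constant recovered by letting $W\to\infty$.

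Feeding this gap bound into the finite de~Bruijn--Erd\H{o}s lemma (\autoref{lem:gamma2n}) the way the proof of \autoref{thm:s} does --- take the single prefix length $\ell$ it hands us with $\ell b_\ell\ge\gamma_N\to\tfrac{1}{\ln 2}$ and contradict it with $\ell b_\ell\le\tfrac{\alpha\ell}{\ell-d}$ --- forces the dyadic window $\sbrac{N,2N}$ to sit inside $\sbrac{d+1,d+m}$ and above the ``death threshold'' $k^{*}=\tfrac{d}{1-\alpha\ln 2}$, and so yields only $\tfrac{m+d}{2}\le k^{*}$, \ie{} $m\le c_2 d+\oh{d}$. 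A first sharpening uses the \emph{whole} window: the proof of \autoref{lem:gamma2n} actually gives $1=\sum_i d_i\le\sum_{k=N}^{2N}b_k\le\alpha\sum_{k=N}^{2N}\tfrac{1}{k-d}$, and choosing $N$ with $2N=\norm{Z}$ improves the bound to $m\le\tfrac{e^{1/\alpha}}{e^{1/\alpha}-2}\,d+\oh{d}$, which tends to $\tfrac{e}{e-2}\,d\approx 3.79\,d$ as $W\to\infty$. Both estimates hit the same barrier, the ``factor $2$'': the dyadic width of the window is exactly what produces the constant $\tfrac{1}{\ln 2}$ (a narrower window gives only the useless constant $1$), while the ceiling $b_k<\tfrac{\alpha}{k-d}$ forces every prefix of $Z$ to have \emph{near-uniform} gaps --- precisely the configuration in which the window provably cannot be narrowed.

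To reach $c_1$ the ceiling must be made to bite at the \emph{endpoint} $k=m+d$ rather than somewhere in a dyadic window: if one could show $k\,b_k\ge\tfrac{1}{\ln 2}(1-\oh{1})$ for some $k\ge(1-\oh{1})(m+d)$, then $\tfrac{1}{\ln 2}\lesssim\tfrac{\alpha(m+d)}{m}$ would give $m\le c_1 d+\oh{d}$ at once. The plan is therefore to prove a \emph{rigidity} strengthening of de~Bruijn--Erd\H{o}s for constrained trajectories: a stick-breaking strategy that keeps $k\,b_k$ as small as possible subject to the ceiling $\tfrac{\alpha}{k-d}$ must asymptotically track the nonchalant strategy with $r\to\tfrac12$, whose normalized maximum gap $k\,b_k$ tends to $\tfrac{1}{\ln 2}$ by \autoref{Theorem Nonchalant} (the plateaus becoming infinitely fine as $r\to\tfrac12$), so such a trajectory first violates $b_k<\tfrac{\alpha}{k-d}$ exactly at $k=\tfrac{d}{1-\ln 2}+\oh{d}$. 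Concretely I would work in the continuous cell-splitting model from the proof of \autoref{Theorem Nonchalant}, tracking not just the longest cell but the full empirical distribution of normalized cell lengths $k\norm{c}$, showing that repeated splitting drags this distribution toward the self-similar stationary profile of the $r\to\tfrac12$ process, and bounding how far it can deviate while respecting the ceiling.

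The principal obstacle is precisely this factor-$2$ loss. In its maximum-gap form de~Bruijn--Erd\H{o}s genuinely needs a window of dyadic width to extract $\tfrac{1}{\ln 2}$, and the gap ceiling offers no extra leverage there; escaping it appears to require controlling a strictly finer statistic than the single largest gap along the \emph{entire} length-$(m+d)$ history --- plausibly a log-averaged gap-size profile, or an entropy-type potential tuned to the $r\to\tfrac12$ extremal trajectory --- together with a proof that no strategy beats it. A secondary, more routine, difficulty is propagating the $\oh{m}$ errors and the $\Oh{W^3}$ patch points of the Farey reduction through such a sharp argument and then sending $W\to\infty$; this should not affect the leading constant but does require the clean ``$\alpha=1$, patch-free'' statement to be genuinely robust. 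I expect that a successful attack would, as a by-product, pin down the exact asymptotics of the finite stick-breaking game with a deadline, which is likely the cleanest arena in which to formulate and prove the conjecture.
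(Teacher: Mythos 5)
This statement is posed in the paper as an open conjecture --- the authors prove only $\floor{c_1d}\leq s(d)\leq c_2d+\oh{d}$ and explicitly remark that it is not even known whether $\lim_{d\to\infty}s(d)/d$ exists --- and your proposal does not close it. What you give is a program, and the step that would actually carry the conjecture is missing: the ``rigidity strengthening'' of de~Bruijn--Erd\H{o}s, namely that any gap trajectory obeying the ceiling $b_k<\frac{\alpha}{k-d}$ must satisfy $k\,b_k\geq(1-\oh{1})\frac{1}{\ln 2}$ at some $k\geq(1-\oh{1})(m+d)$, is asserted as a hope, not proved, and no argument is offered that the extremal constrained strategy must track the nonchalant $r\to\frac12$ process. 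Lemma~\ref{lem:gamma2n} only guarantees a large normalized gap \emph{somewhere} in a dyadic window, and, as you yourself note, the ceiling gives no leverage to localize it near the endpoint; the proposed remedy (tracking the full empirical gap distribution, or an entropy-type potential, in the continuous model of Theorem~\ref{Theorem Nonchalant}) is left entirely unexecuted, and it is not even argued that the claimed rigidity is true rather than merely plausible. Since the upper-bound direction $\limsup s(d)/d\leq\frac{\ln2}{1-\ln2}$ is exactly the open content (and existence of the limit is a further unproved claim), the proposal contains a genuine gap that coincides with the conjecture itself.

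Two smaller points. First, your intermediate sharpening --- replacing the single worst index in the proof of Theorem~\ref{thm:s} by the summed inequality $1\leq\sum_{l=L}^{2L}b_l\leq\alpha\sum_{l=L}^{2L}\frac{1}{l-d-\Oh{W^3}}$ over the whole window --- looks sound given the paper's construction of $Z$ (the ceiling does hold throughout $[L,2L]$ there), and if the bookkeeping with the $\Oh{W^3}$ patch points and the range condition on $M/\alpha$ is done carefully it would improve the constant from $\frac{1+\ln2}{1-\ln2}\approx5.52$ toward $\frac{e}{e-2}\approx3.78$; that is a worthwhile observation, but it is an improvement of the paper's upper bound, not a proof of the conjectured value $\frac{\ln2}{1-\ln2}\approx2.25$, and your own text concedes it hits the factor-$2$ barrier. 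Second, the lower bound you cite only gives $\liminf s(d)/d\geq\frac{\ln2}{1-\ln2}$; to prove the stated limit you would still need both the matching $\limsup$ bound and the existence of the limit, neither of which your outline delivers.
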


Let us stress, however, that it is not known whether the above limit actually exists.

Going back to the original Steinhaus' problem, one may ask for possible multidimensional versions. Here is one of them for sequences of points in the plane. Imitating the original piercing property, we say that a sequence of points $X$ in the unit square is \emph{piercing} if for every finite prefix of $X$ of length $n$ there exists a tiling of the square into $n$ rectangles, each of area $\frac{1}{n}$, such that every rectangle contains exactly one point of the prefix. Is it true that there exist arbitrarily long piercing sequences in the unit square?

\end{document}